\documentclass[journal,onecolumn]{IEEEtran}
\ifCLASSINFOpdf
\else
\fi
\usepackage{amsmath,amsfonts,amsthm,amssymb,mathrsfs}
\newtheorem{theorem}{Theorem}
\newtheorem{definition}{Definition}
\newtheorem{assumption}{Assumption}
\newtheorem{proposition}{Proposition}

\newtheorem{lemma}{Lemma}

\renewcommand{\gg}{\mathfrak{g}}

\newcommand{\FF}{\mathscr{F}}

\newcommand{\real}{\mathbb{R}}
\newcommand{\MC}{\mathbb{C}}

\renewcommand{\hat}{\widehat}

\newcommand{\isp}{\langle \cdot, \cdot \rangle}

\newcommand{\eps}{\varepsilon}

\newcommand{\Id}{\textup{Id}}

\newcommand{\ME}{\mathbb{E}}

\newcommand{\unitary}{\textbf{\textup{U}}}

\newcommand{\SO}{\textbf{\textup{SO}}}
\newcommand{\hh}{\mathfrak{h}}
\newcommand{\pp}{\mathfrak{p}}
\newcommand{\MP}{\mathbb{P}}
\renewcommand{\Re}{\mathfrak{Re}}
\newcommand{\MV}{\mathbb{V}}

\newcommand{\Log}{\textup{Log}}
\DeclareMathOperator*{\argmin}{arg\,min}

\begin{document}
%
% paper title
% Titles are generally capitalized except for words such as a, an, and, as,
% at, but, by, for, in, nor, of, on, or, the, to and up, which are usually
% not capitalized unless they are the first or last word of the title.
% Linebreaks \\ can be used within to get better formatting as desired.
% Do not put math or special symbols in the title.
\title{Decompounding on Compact Symmetric Spaces%\\ for IEEE Transactions on Information Theory
}
%
%
% author names and IEEE memberships
% note positions of commas and nonbreaking spaces ( ~ ) LaTeX will not break
% a structure at a ~ so this keeps an author's name from being broken across
% two lines.
% use \thanks{} to gain access to the first footnote area
% a separate \thanks must be used for each paragraph as LaTeX2e's \thanks
% was not built to handle multiple paragraphs
%

\author{Erik~Kennerland% <-this % stops a space
\thanks{This publication is partially financed by the
Dutch Research Council (NWO) under the project number OCENW.M.24.104.}% <-this % stops a space
\thanks{Erik Kennerland is affiliated with Delft University of Technology.}% <-this % stops a space
%\thanks{Manuscript received Month Day, Year; revised Month Day, Year.}
}

\maketitle

% As a general rule, do not put math, special symbols or citations
% in the abstract or keywords.
\begin{abstract}
This paper examines a stochastic deconvolution problem on compact symmetric spaces which is referred to as decompounding. This involves estimating the step distributions of a random walk, where in addition the number of steps between observations is unknown. The harmonic analysis of symmetric spaces is used to construct an estimator to the problem which converges in mean squared error, extending and improving on the analogous problem on compact Lie groups. The rates of convergence are shown to coincide with asymptotic lower bounds of density estimation in Euclidean space. We provide proofs that while the same rates hold for general density estimation problems in compact symmetric spaces, the decompounding problem lies in a subclass of these with different lower bounds depending on the rank of the space. Consequently, the optimality of the estimator depends on the rank of the symmetric space. Decompounding is a broad problem which appears in applications ranging from mathematical finance to wave optics, and the extension to compact symmetric spaces covers manifolds that commonly appear in the statistics literature.

\end{abstract}

% Note that keywords are not normally used for peerreview papers.
\begin{IEEEkeywords}
symmetric spaces, non-commutative harmonic analysis, Lévy processes, density estimation, deconvolution
\end{IEEEkeywords}

% For peer review papers, you can put extra information on the cover
% page as needed:
% \ifCLASSOPTIONpeerreview
% \begin{center} \bfseries EDICS Category: 3-BBND \end{center}
% \fi
%
% For peerreview papers, this IEEEtran command inserts a page break and
% creates the second title. It will be ignored for other modes.
\IEEEpeerreviewmaketitle

\section{Introduction}
% The very first letter is a 2 line initial drop letter followed
% by the rest of the first word in caps.
% 
% form to use if the first word consists of a single letter:
% \IEEEPARstart{A}{demo} file is ....
% 
% form to use if you need the single drop letter followed by
% normal text (unknown if ever used by the IEEE):
% \IEEEPARstart{A}{}demo file is ....
% 
% Some journals put the first two words in caps:
% \IEEEPARstart{T}{his demo} file is ....
% 
% Here we have the typical use of a "T" for an initial drop letter
% and "HIS" in caps to complete the first word.
\IEEEPARstart{M}{}any signal processing and statistical inference problems involve having observations of a latent process that have been perturbed or transformed by an accumulation of multiple actions. We can model this situation abstractly by assuming that the possible transformations form a group which acts on either itself or on some manifold, and suppose that the number of actions on the object itself is random. The recovery of the distribution of the original signal from such observations is referred to as decompounding and is a problem that has received attention in both statistical inference and pure probability theory. The problem was first termed in \cite{Buchmann} in the context of insurance mathematics as a prediction problem of the occurrences of random fluctuations in the value of an asset. In this setting, it is realized for real-valued signals and corresponds to the study of random sums 
$$\sum_{k \leq T} X_k$$
of i.i.d.\ variables $X_k$, with the crucial distinction that the number of terms to include is randomly determined by $T$. The abstraction of the problem to the compact Lie group setting followed in the work of \cite{Said} which was in part motivated by physical applications in modeling of wave scattering on a sphere, but it also laid the foundations for decompounding estimates in the non-Euclidean setting. The use of non-commutative harmonic analysis of Lie groups is crucial for this extension. It is also a strong toolkit for general statistical problems on Lie groups, which have been studied quite extensively for general linear deconvolution problems \cite{Yazici,KooKim} especially with regards to the classical matrix Lie groups \cite{KimSO}. An extension to symmetric spaces is natural from a theoretical standpoint since we can apply techniques from the harmonic analysis of compact Riemannian manifolds. It also better captures the idea of decompounding as the accumulation of random transformations of an object, since in the symmetric space setting we can relax the assumption of the object itself constituting a group. Instead, only an action by a group of transformations is necessary.

Compact symmetric spaces are often considered in more general statistical problems too. The fact that they are classified up to isomorphism \cite{Helgason}, due in large part to the seminal work of Élie Cartan, is a motivation in its own since canonical choices of bases of eigenfunctions are explicitly known and computable for many of the compact symmetric spaces. Among them are the unit spheres $S^d \subset \real^{d+1}$ which lay the foundations of the field of directional statistics, a field with applications ranging from astronomy to geological sciences \cite{Fisher}. Recent interest in statistical modeling of $S^d$ has also arisen in the training of large language models \cite{Shalova}, as well as in computer vision and image recognition \cite{Liu} since, roughly speaking, the intrinsic geometry of the sphere allows for small changes in a dataset to have large contribution in weight as opposed to in modeling on $\real^d$. Extending the theory of decompounding to these cases is non-trivial since, except in the exceptional cases $d = 0,1,3$, the sphere $S^d$ fails to admit a Lie group structure. Other compact symmetric spaces of relevance in the statistics literature are the Grassmannians $\textup{Gr}(k,n) \cong \SO(n)/(\SO(k)\times \SO(n-k))$ for $0 \leq k < n$, which are typically used for models in image recognition for their ability to approximate features of images on lower-dimensional spaces, see e.g.\ \cite{Turaga}.

\subsection{Decompounding in Summary}
In this paper $G$ will always denote a compact and connected Lie group of finite dimension, and $(\Omega, \FF,\MP)$ is a fixed probability space. As studied in \cite{Said}, see also \cite{Applebaum,Applebaum2,Liao} a compound process $(y_t)_{t\geq 0}$ on $G$ is a stochastic process on $G$ defined by
\begin{align*}
    y_t = \prod_{n = 1}^{N_t} x_n,
\end{align*}
for an identically distributed sequence $(x_n )_{n \geq 1}$ of random variables on $G$ that are pairwise independent not only to each other but also to a homogeneous Poisson process $(N_t)_{t\geq 0}$ for all $t \geq 0$. In other words, the variables $x_n$ act as step distributions in a random walk on $G$ whose walking frequency is randomly determined by $(N_t)_{t \geq 0}$. From this, we identify the decompounding problem as the inference problem of estimating the distribution of the step distributions $(x_n)_{n \geq 1}$ given observations of the process $(y_t)_{t \geq 0}$. Compound processes $(p_t)_{t \geq 0}$ on a symmetric space $M = G/K$ take the similar form
$$
p_t = \bigg(\prod_{n= 1}^{N_t}x_n\bigg)K,
$$
a definition that will be explained and motivated in more detail in Definition \ref{def:Compound}. The crucial detail to notice is that $(p_t)_{t \geq 0}$ can be viewed as a random walk that acts on the quotient $G/K$, in contrast to the compound process $(y_t)_{t \geq 0}$ on $G$ which must take place on the space itself.

Compound processes on symmetric spaces have been studied previously in \cite{Applebaum2}, particularly with regards to the asymptotic behaviour of their distributions as $t \to \infty$. However, decompounding methods on symmetric have to the authors best knowledge not previously been established. A first step in this endeavor is therefore to propose a method of decompounding on symmetric spaces. In the initial work on Lie groups of \cite{Said}, a major contribution lied in showing that under mild assumptions on the distribution of $(x_n)_{n \geq 1}$, among them that the distribution admits a probability density, the Fourier coefficients of the probability density can be approximated with convergence in probability. Although, the rate at which it does so was not discussed in detail. It naturally follows to question whether stronger notions of convergence are possible. The first contribution of this paper lies in showing that similar estimators can be constructed for decompounding on a symmetric space, and that the squared error can be strengthened to converge in expectation at an inverse proportional rate to the number of available samples for estimation. This is the statement of Proposition \ref{prop:L1Estimator}.

Note in particular that considerations of symmetric spaces are an extension to the Lie group case in the literal sense. Indeed, the compact Lie groups form a special subclass of the compact symmetric spaces under the identification of a Lie group as symmetric space $G \cong (G \times G)/\Delta G$ by the diagonal subgroup $\Delta G :=\{(x,x)\in G\times G: x \in G\}$ of $G \times G$. 

\subsection{Extending to the Question of Optimal Estimation}
Once Fourier coefficients can be approximated, the question moves to how to choose a finite number of them to include in an approximate series expansion of a probability density. We are thus led to the domain of inference and information theory in the pursuit of making this choice optimal.

%The total expected squared error of such an estimator will split into a variance term and bias term as is typical in inference on function spaces. The second main contribution of this paper is the change of perspective to information theoretical aspects of decompounding. 
Our second contribution therefore lies in characterizing decompounding as a density estimation problem, to establish a method with desirable convergence rates, and to establish how it fares in comparison to lower bounds of density estimation on symmetric spaces. Explicitly, we will view it as a density estimation problem over a suitable Sobolev class of densities. We will show that the total expected mean squared error $\ME\|f_X^{(m)}-f_X\|^2$ of the estimated density $f_X^{(m)}$ to the true density $f_X$ converges at a rate of $$m^{-2s/(2s+d)}$$ for the dimension $d$ of the symmetric space and $s > 0$ the Sobolev class of $f_X$. Here, $m \geq 1$ is the number of observations used in the estimate. The construction of the estimator relies on the harmonic analysis of the symmetric space as well as the properties of its Laplace--Beltrami-operator, which explains the occurrence of $d$ in this expression. This is the statement of Theorem \ref{theorem:densityEstimation}.

The question of whether this estimator is optimal is more intricate. Lower bounds on density estimation on symmetric spaces have been studied quite recently in \cite{Asta1,Asta2} for symmetric spaces of non-compact type, where in particular the identical rate $m^{-2s/(2s+d)}$ is attained. Upper bounds on kernel density estimation have also been studied on arbitrary compact Riemannian manifolds where kernel estimators are constructed that attain identical rates \cite{Pelletier}. Of course, the expression should also be familiar from rates expected from inference problems in the Euclidean space $\real^d$, covered in classical books such as \cite{Tsybakov}. Lower and exact bounds of density estimations have been found on Lie groups for linear deconvolution problems in \cite{KooKim} and also to some extent on the $2$-sphere in \cite{KooKim2}. There, more specific rates are attained by putting assumptions of smoothness on the linear channel, but it should be noted that decompounding is a non-linear deconvolution problem so the exact rates of \cite{KooKim} do not apply in this setting.

Continuing further, there exist results in describing the $n$-width Kolmogorov distances for much more general classes of manifolds on Sobolev spaces defined by more intricate diffusion operators \cite{Geller,Pesenson}. While similar in their results and ideas, the $n$-width Kolmogorov distance assumes deterministic knowledge of finite-dimensional subspaces. Density estimation involves both the choice of subspaces to use for estimation, but also the estimation of elements within these subspaces. 

Our third contribution therefore lies in providing an exhaustive proof for the case that density estimation of densities of uniformly bounded Sobolev class $s > 0$ in a compact symmetric space is lower bounded by the rate 
$$
m^{-2s/(2s+d)}.
$$
This result is to be expected from the analogous result on symmetric spaces of non-compact type, and the local topological equivalence between a symmetric space of dimension $d$ and $\real^d$. It is to the authors best knowledge a result which has no standard reference in the literature, however. The main ideas in the proof are to combine classical arguments by means of constructing a finite well-separated set of densities for an application of Fano's inequality with the geometrical structure of the symmetric space and its harmonic analysis via its Laplace--Beltrami-operator. This is the content of Proposition \ref{prop:lowerBoundM}.

The final contribution of the paper lies in realizing that the lower bound above does not necessarily apply to the decompounding problem. Rather, it falls in the category of a smaller class of density estimation problems of estimators with invariance properties on $M$. We therefore provide a proof that in this subclass, a lower bound exists in 
$$
m^{-2s/(2s+r)}
$$
where $r = \textup{rank}(M)$ is the rank of the symmetric space $M$, see Proposition \ref{prop:lowerBoundMKinvariant} below. In other words, the optimality of the proposed decompounding estimator depends on the rank of the symmetric space and is closer to optimality for large values of $r$.

Consequently, our proposed estimator for decompounding converges at a stronger and faster rate than has previously been established, and the rate follows the asymptotical lower bounds of density estimators on compact symmetric spaces of maximal rank $r = d$. It remains unclear whether the lower bound for compact symmetric space of non-maximal rank $r < d$ is exact for the decompounding problem specifically. If it is the case, there remains a gap for future work in providing an estimator which attains the minimax rates when $r < d$. The symmetric space decompounding method applies to a broader range of problems with non-trivial extensions to symmetric spaces that make common appearances in the statistics literature, both from the applied and theoretical perspective.
\subsection{Outline}
The paper is structured as follows: Section \ref{sec:background} provides a brief account of the known results from the theory of Lévy processes on Lie groups and symmetric spaces, mainly setting the notation of Hunt's generator formula and the Lévy--Khinchin-formula, which is important for characterizing the relation between the distribution of the observations and the latent distribution. In Section \ref{sec:FourierCoeffs}, the statistical assumptions are formulated, which allows for a detailed description about the Fourier coefficient estimators. Section \ref{sec:Decompounding} continues by using the provided Fourier coefficient estimators to construct an estimator of the probability density. We also provide the proofs of the lower bounds on density estimation under the assumptions of the decompounding problem. Section \ref{Sec:FurtherWork} briefly discusses the effect that noise perturbations have on the method, highlighting avenues for future work. Section \ref{sec:Conclusion} ends the paper by providing a conclusion of the results and its implications.

\subsection{Notation and Conventions}
%We adhere to the convention that the natural numbers are positive and denote them by $\MN := \{1,2,3,\ldots\}$. The non-negative integers are denoted by $\MZ_+ := \{0,1,2,3\ldots\}$.
For two real-valued functions (or sequences) $f$ and $g$, the notation $f \lesssim g$ signifies that there exists a constant $C > 0$ for which 
$f \leq Cg$ in the entire domain of the functions (or points of the sequence). Writing $f \sim g$ signifies that $f \lesssim g$ and $g \lesssim f$. When $f\sim g$ we say that $f$ and $g$ are asymptotically equivalent.

Except possibly for the probability space $(\Omega,\FF,\MP)$, all measures and functions that are considered in this paper appear on topological spaces. For brevity, we will always assume that measurability refers to the underlying Borel $\sigma$-field and that the measures are Borel measures. 

Lastly, we reserve the letter $\chi$ to denote indicator functions. That is, if $A \in \FF$ is an event, then
$$
\chi_A(\omega) := \begin{cases}
    1, & \textup{if } \omega \in A\\
    0, & \textup{if } \omega \notin A.
\end{cases}
$$
While a trivial remark, we nonetheless bring it to attention since this nomenclature may be seen as atypical in the probability and statistics literature.

\section{Background}
\label{sec:background}
\subsection{Preliminaries}
We will briefly recollect and set notation for the theory of harmonic analysis of compact Lie groups and symmetric spaces, which can be viewed in more detail in classical references such as \cite{Varadarajan,Helgason,Helgason3} or \cite{Applebaum,Liao} for a more probabilistic approach. For the compact and connected Lie group $G$ we denote by $\lambda$ its normalized probability Haar measure $\lambda(G) = 1$. The collection of equivalence classes of irreducible linear representations of $G$, identified up to isomorphism, is denoted by $\hat{G}$. By standard procedure of mixing by $\lambda$, each class in $\hat{G}$ contains a unitary representation $(\pi, V_\pi)$ for $V_\pi$ a linear space of finite dimension $d_\pi$ and $\pi: G \to \unitary(V_\pi)$, where $\unitary(V_\pi)$ is the group of unitary invertible linear endomorphisms of $V_\pi$. With abuse of language, we will always choose a unitary representation to represent each class in $\hat{G}$ and without further care use notation such as $\pi \in \hat{G}$ for such a unitary representation $\pi$. Recall in particular that $\hat{G}$ is countable, that all irreducible representations of $G$ are finite-dimensional, and that each $\pi \in \hat{G}$ is countably infinitely differentiable $\pi \in C^\infty(G)$.

We will typically write $L^2(G) := L^2(G,\lambda; \MC)$ and denote its inner product $\langle f,g\rangle := \int_Gf\,\overline{g}\,d\lambda$. For an arbitrary Hilbert space $H$ we will denote its inner product by $\isp_{H}$ to not cause confusion. For any representation $\pi \in \hat{G}$, the function $\pi_{ij}: G \to \MC$ which takes the value of the matrix representation of $\pi$ at index $(i,j)$ under some fixed basis of $V_\pi$, for $1 \leq i,j \leq d_\pi$, is a smooth function. By the Peter--Weyl theorem \cite[Corollary 2.2.4]{Applebaum} the set 
$$
\{d_\pi^{1/2}\pi_{ij}: \pi \in \hat{G}, \: 1 \leq i,j \leq d_\pi\}
$$
forms a complete ON-system of $L^2(G)$.

A compact symmetric space will be denoted by $M$, but is identified as the quotient $G/K$ by a closed (stabilizer) Lie subgroup $K$ with an involutive Cartan automorphism $\Theta$. Elements of $M$ are written either as $p \in M$ or simply in coset form $xK$, and the element which corresponds to $eK \in G/K$ is denoted by $p_0$. Furthermore, we set $L^2(M) := L^2(M, \lambda_M; \MC)$, where $\lambda_M = \lambda \circ \sigma^{-1}$ is the translation invariant measure induced by pushing forward by the canonical projection 
$$\sigma: G \to G/K.
$$
The inner product in $L^2(M)$ is also simply denoted by $\isp$. An irreducible representation $\pi$ for which the $K$-fixed point subspace $V_\pi^K := \{v \in V_\pi: \pi(k)v = v,\: \forall k \in K\}$ is non-zero is said to be a spherical representation and we denote the set of such representations by $\hat{G}_K$. Since $V_\pi^K$ is at most of dimension $1$ for compact symmetric spaces $M$, the definition
\begin{align}
\label{eq:spherical}
\phi_\pi: M \to \MC, \qquad \phi_\pi(xK) = \langle \pi(x)v_0,v_0\rangle_{V_\pi}
\end{align}
is well-posed and independent of the choice of any $v_0 \in V_\pi^K$ of unit length. We recall in particular, see e.g.\ \cite[Theorem 5.18]{Liao}, that functions of the form $\phi_\pi$ above, for $\pi \in \hat{G}_K$, constitute the spherical functions on $M$ and that  the set
$$
\{d_\pi^{1/2}\phi_\pi: \pi \in \hat{G}_K\}
$$
forms an ON-system in the closed subspace $L^2_K(M)$ of $K$-invariant functions in $L^2(M)$. 
\subsection{Compound Processes}
Let us formalize the definition of a compound process.
\begin{definition}
\label{def:Compound}
    Let $(x_n)_{n \geq 1}$ be a sequence of i.i.d.\ variables on $G$ and let $(N_t)_{t \geq 0}$ be a homogeneous Poisson counting process with intensity parameter $\Lambda > 0$ which is independent to $x_n$ for all $t \geq 0$ and $n \geq 1$. A \textbf{compound process} on $G$ is a process $(y_t)_{t \geq 0}$ of the form
    \begin{align}
        \label{eq:compoundOnG}
        y_t = \prod_{n = 1}^{N_t} x_n,
    \end{align}
    where the product is taken from left to right.

    A compound process $(p_t)_{t \geq 0}$ on a symmetric space $M$ is a process of the form $p_t = \sigma(y_t)$ for a compound process $y_t$, where in addition the distribution of $x_n$ is assumed to be $K$-bi-invariant. That is,
    \begin{align}
    \label{eq:compoundOnM}
    p_t = \bigg(\prod_{n = 1}^{N_t}x_n\bigg)K,
    \end{align}
    and the distribution of $x_n$ is invariant under both left and right translations by elements in $K$.
\end{definition}
Observe that a compound process $(y_t)_{t\geq 0}$ on $G$ is a Lévy process, which is important for describing the generator of the distributions $\mu_t$ of $y_t$. Lévy processes on $M$ correspond precisely with the projections by $\sigma$ of $K$-bi-invariant Lévy processes on $G$ \cite[Proposition 1.12]{Liao}. This motivates the assumption that the steps $(x_n)_{n \geq 1}$ in Equation \eqref{eq:compoundOnM} are $K$-bi-invariant.

Compound processes on $M$ have also been studied with respect to their asymptotic behaviour in distribution as $t \to \infty$ in \cite{Applebaum2}. As in the case of compact Lie groups, see \cite[Proposition 5]{Said}, the distribution of $p_t$ converges weakly to $\lambda_M$ as $t \to \infty$ at an exponential rate, under mild assumptions on the initial distribution of $(p_t)_{t\geq 0}$. This tells us that information about the distribution of the steps $(x_n)_{n \geq 1}$ dissipates as time passes and is relevant for questions on sampling of $(y_t)_{t \geq 0}$ or $(p_t)_{t \geq 0}$ in the decompounding problem.

\subsection{Lévy generators}
Lévy processes on $G$ are rcll Markov processes and we therefore note that the Markov operators $(\widetilde{P_t})_{t \geq 0}$ defined on $f \in C(G)$ by $\widetilde{P}_tf(x) = \ME[f(xx_t)]$ form a $C_0$-semigroup with a densely defined generator $\widetilde{L}$ say \cite[Chapter 17]{Kallenberg}. Like the generator of a Markov process on $\real^d$, it defines the process in terms of a diffusion term, a deterministic drift, and a contribution by a Lévy measure, as originally proven by Hunt \cite{Hunt}. Explicitly, we will consider generators of Lévy processes on $G$ to be defined by $f \in C^\infty(G)$ as 
$$
\widetilde{L}f(x) = \frac{1}{2}\widetilde{D}f(x)+\xi_0f(x)+\int_G (f(xy)-f(x))\,d\eta(y),
$$
for a deterministic drift vector $\xi_0\in \gg$ on the Lie algebra $\gg$ of $G$, a non-negative definite symmetric and $G$-invariant diffusion operator $\widetilde{D}$, and a Lévy measure $\eta$ on, see also \cite[Theorem 2.2]{Liao}. If $p_t = \sigma(y_t)$ is a Lévy process on $M$, then its generator $L$ is simply given, for $f \in C^\infty(M)$, by
%If we endow $G$ with an adjoint-invariant Riemannian metric, e.g.\ by extension from the Killing form on $\gg$, and consider the corresponding decomposition $\gg = \pp \oplus \hh$ with $\hh$ the Lie algebra of $K$ and $\pp = \hh^\perp \cong T_{eK}M$ its complement, then the generator $L$ of $(\sigma(y_t))_{t \geq 0}$ takes a similar form. Indeed, in that case $L$ is determined for $f \in C^\infty(M)$ by the corresponding Hunt's formula
\begin{align}
    \label{eq:HuntsOnM}
    Lf(xK) = \frac{1}{2}\sigma(\widetilde{D})f(xK) + \sigma(\xi_0)f(xK)+\int_M(f(xyK)-f(xK))\,d\eta(y), 
\end{align}
where $\sigma(\widetilde{D})$ and $\sigma(\xi_0)$ are the lifted operators via pullback by $\sigma$. The fact that $\sigma(\widetilde{D})$ and $\sigma(\xi_0)$ are well-defined can viewed in more detail in \cite[Chapter 3]{Liao}, but comes down to the fact that the $K$-bi-invariance of $(y_t)_{t \geq 0}$ forces $\widetilde{D}$ and $\xi_0$ in $\widetilde{L}$ to have additional invariance properties in $K$.

\subsection{Lévy--Khinchin Formulas}
From now on $(\mu_t)_{t \geq 0}$ denotes the family of distributions $\mu_t := \MP \circ y_t^{-1}$ of a Lévy process $(y_t)_{t \geq 0}$ on $G$ and $(\nu_t)_{t \geq 0}$ the distributions $\nu_t := \MP \circ p_t^{-1}$ of a Lévy process $(p_t)_{t \geq 0}$
on $M$. Since the processes are (left) $G$-invariant, there is no loss in generality in assuming that their initial distributions are fixed at $e \in G$ or $p_0 \in M$. By straightforward computations\footnote{This is seen more directly by noting that the distributions of Lévy processes on either $G$ or $M$ form convolution semigroups.} it is seen that $\mu_{s+t}(\pi) = \mu_s(\pi)\mu_t(\pi)$ and $\mu_0(\pi) = \Id$ for all $s, t \geq0$ and $\pi \in \hat{G}$. In other words
$$
\{\mu_t(\pi): t \geq 0\}
$$
forms a $1$-parameter semigroup of matrices of size $d_\pi \times d_\pi$ and is therefore of the form $\mu_t(\pi) = \exp(tA)$ for some matrix $A$. By differentiating at $t = 0$, it is easy to see that the generator $A$ is equal to $\widetilde{L}\pi(e)$, where $\widetilde{L}\pi = (\widetilde{L}\pi_{ij})_{i,j = 1}^{d_\pi}$. Hence, we obtain a Lévy--Khinchin-type formula in
$$
\mu_t(\pi) = \exp\bigg(t\widetilde{L}\pi(e)\bigg).
$$
 Similarly, $\nu_{s+t}(\phi_\pi) = \nu_s(\phi_\pi)\nu_t(\phi_{\pi})$ for any spherical function $\phi_\pi$ and all $s,t \geq 0$ and by identical argumentation the formula
\begin{align}
    \label{eq:LevyKhinchinM}
    \nu_t(\phi_\pi) = \exp\bigg(tL\phi(eK)\bigg)
\end{align}
is true as well. As Proposition \ref{prop:LevyKhinchinForCompound} below shows, the generator $L$ of a compound process is particularly easily expressible.
\begin{proposition}
    \label{prop:LevyKhinchinForCompound}
    Let $\mu_X$ be the distribution of the steps $(x_n)_{n \geq 1}$ of a compound process $p_t = \sigma(y_t)$ on $M$. Then
    \begin{align}
    \label{eq:LevyKhinchinCompoundM}
    \nu_t(\phi_\pi) = \exp(t\Lambda \mu_X(\phi_\pi\circ \sigma)-t\Lambda).
    \end{align}
    In particular, $(p_t)_{t \geq 0}$ has no drift or diffusion term, and its Lévy measure is given by $\eta = \Lambda \mu_X\circ \sigma^{-1}$.
\end{proposition}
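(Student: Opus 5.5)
The plan is to compute the Fourier coefficient $\nu_t(\phi_\pi)$ directly by conditioning on the Poisson count $N_t$. Afterwards we read off the generator by comparing with \eqref{eq:LevyKhinchinM}.

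\textbf{Step 1: the Fourier coefficient of a single step.} First we record the key multiplicativity property. For $K$-bi-invariant measures $\mu$ on $G$ and a spherical representation $\pi$, let $P_\pi$ be the projection onto $V_\pi^K$, which is the line spanned by $v_0$. Bi-invariance gives
$$\int_G \pi(x)\,d\mu(x) = P_\pi\Big(\int_G\pi\,d\mu\Big)P_\pi,$$
since averaging over $K$ on either side produces $P_\pi = \int_K \pi(k)\,dk$. Consequently this matrix equals $\mu(\phi_\pi\circ\sigma)\,v_0\otimes v_0^*$, with $\phi_\pi\circ\sigma(x) = \langle \pi(x)v_0,v_0\rangle$.

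\textbf{Step 2: conditioning on $N_t$.} Conditioning on $N_t=n$, and using independence of $N_t$ from the steps, we get
$$\nu_t(\phi_\pi) = \ME[\phi_\pi(p_t)] = \sum_{n\ge0} e^{-\Lambda t}\frac{(\Lambda t)^n}{n!}\,\ME\big\langle \pi(x_1)\cdots\pi(x_n)v_0,v_0\big\rangle.$$
By independence of the $x_k$, the expectation of the product is the product of the matrices $\ME[\pi(x_k)]$. By Step 1 each of these is $\mu_X(\phi_\pi\circ\sigma)\,v_0\otimes v_0^*$, so the $n$-fold product is $\mu_X(\phi_\pi\circ\sigma)^n\, v_0\otimes v_0^*$. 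The term $n=0$ gives $\phi_\pi(p_0)=1$. Summing the exponential series then yields \eqref{eq:LevyKhinchinCompoundM}.

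\textbf{Step 3: identifying the generator.} Comparing with \eqref{eq:LevyKhinchinM}, we must show $L\phi_\pi(eK) = \Lambda(\mu_X(\phi_\pi\circ\sigma)-1)$. We do this directly from the definition of the generator. Write $P_tf(xK) = \ME f(xy_tK)$ and condition on the event $\{N_t \le 1\}$, whose complement has probability $O(t^2)$. This gives
$$P_tf(xK) = e^{-\Lambda t}f(xK) + \Lambda t e^{-\Lambda t}\int_G f(xyK)\,d\mu_X(y) + O(t^2).$$
Differentiating at $t=0$ produces
$$Lf(xK)=\int_G \big(f(xyK)-f(xK)\big)\,\Lambda\, d\mu_X(y)$$
for all $f\in C(M)$, and in particular for $f\in C^\infty(M)$. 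Matching with Hunt's formula \eqref{eq:HuntsOnM}, the jump part has Lévy measure $\Lambda\mu_X\circ\sigma^{-1}$, and the remaining operator $\frac12\sigma(\widetilde D)+\sigma(\xi_0)$ annihilates $C^\infty(M)$. Hence there is no drift or diffusion, by uniqueness of the Hunt decomposition (as in \cite{Liao}).

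\textbf{Main obstacle.} The delicate point is this uniqueness claim. The first-order and second-order parts must vanish individually, not merely their sum. The plan here is to use that the generator is a bounded operator on $C(M)$, being an integral operator of norm at most $2\Lambda$. A nonzero drift or diffusion term would make the generator unbounded, so both $\sigma(\widetilde D)$ and $\sigma(\xi_0)$ must be zero. Alternatively, we can invoke the uniqueness of the triple in Hunt's theorem directly.
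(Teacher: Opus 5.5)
Your proof is correct, but it is organized differently from the paper's. The paper stays on $G$. Conditioning on $N_t$ gives it the full matrix identity $\mu_t(\pi)=\exp(t\Lambda(\mu_X(\pi)-\Id))$ for every $\pi\in\hat{G}$. From this it reads off $\widetilde{L}\pi(e)$ and infers via Peter--Weyl that $\widetilde{L}f(x)=\Lambda\int_G(f(xy)-f(x))\,d\mu_X(y)$, hence no drift or diffusion and $\eta=\Lambda\mu_X$. It then projects this to $M$ and only at the end recovers the spherical formula by evaluating $L\phi_\pi(eK)$ in \eqref{eq:LevyKhinchinM}. You instead work with the scalar spherical transform from the start. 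Your Step 1, $\mu_X(\pi)=\mu_X(\phi_\pi\circ\sigma)\,P_\pi$ with $P_\pi$ the rank-one projection onto $V_\pi^K$, is exactly what makes $\phi_\pi$-transforms multiplicative. So the displayed identity follows from conditioning alone, with no generator theory. Your Step 3 then obtains the generator from a small-time expansion of the semigroup. This shows directly that $L$ is bounded on all of $C(M)$, and it avoids the paper's step of reconstructing the whole generator from its values on matrix coefficients. Your route therefore makes explicit where $K$-bi-invariance enters; the paper's appeal to \eqref{eq:LevyKhinchinM} tacitly relies on the same multiplicativity. It also handles the $n=0$ term cleanly. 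The paper's route, in exchange, yields the stronger matrix-valued formula for $y_t$ on $G$ for all $\pi\in\hat{G}$, which links the result to the Lie group case. For the claim of no drift or diffusion, both arguments ultimately rest on uniqueness of the Hunt triple. Your boundedness remark on its own presupposes that the jump part in Hunt's representation is the bounded one you computed. Invoking uniqueness directly, as you propose, is therefore the right way to close that step.
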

\begin{proof}
    The proof is done in two steps. Firstly, we follow a similar approach to that of \cite{Said} to compute $\mu_t(\pi)$ explicitly. Then, the structure of $\widetilde{L}$ can be identified and lifted to $L$ for an expression of $\nu_t(\phi_{\pi})$ thanks to Equation \eqref{eq:LevyKhinchinM}. Indeed, for $\pi \in \hat{G}$, we condition over the values of $N_t$ to see that
    \begin{align*}
        \mu_t(\pi) &= \ME[\pi(y_t)] \\
        & =\ME[\ME[\pi(y_t)|N_t]]\\
        & = \sum_{n = 1}^\infty \ME[\pi(y_t)| N_t = n]\MP[N_t = n]\\
        & = {e^{-\Lambda t}}\sum_{n = 1}^{\infty}\frac{(\Lambda t)^n}{n!}\ME\Bigg[\pi\bigg(\prod_{m = 1}^n x_m\bigg)\Bigg].
    \end{align*}
    However, mutual independence in the step distributions gives
    $$
    \ME\Bigg[\pi\bigg(\prod_{m = 1}^n x_m\bigg)\Bigg] = \prod_{m = 1}^n\ME[\pi(x_m)] = \mu_X(\pi)^n.
    $$
    Hence,
    $$
    \mu_t(\pi) = e^{-t\Lambda }\sum_{n = 1}^\infty \frac{(t\Lambda )^n}{n!}\mu_X(\pi)^n = e^{-t\Lambda }\exp(t\Lambda \mu_X(\pi)) = \exp\Big(t\Lambda(\mu_X(\pi)-\Id)\Big) = \exp\Big(t\Lambda(\mu_X(\pi)-\pi(e))\Big).
    $$
    This shows that $\widetilde{L}\pi(e) = \Lambda \mu_X(\pi)-\Lambda \pi(e)$ and completes the first step. Because of the Peter--Weyl theorem the values of $\widetilde{L}$ for $\pi \in \hat{G}$ determine the generator, and it must therefore be given by
    $$
    \widetilde{L}f(x) = \Lambda\big(\mu_X(y \mapsto f(xy))-f(x)\big)
    $$
    for a general $f \in C^\infty(G)$. This expression is easily rewritten as
    $$
    \widetilde{L}f(x) = \Lambda \int_Gf(xy)-f(y)\,d\mu_X(x)
    $$
    which identifies the Lévy measure $\eta$ of $y_t$ as $\eta = \Lambda \mu_X$. It also shows that $(y_t)_{t \geq 0}$ has no drift or diffusion. The Lévy measure $\eta_M$ of $(p_t)$ is therefore $\eta_M = \Lambda \mu_X\circ \sigma^{-1}$ and the expression
    $$
    \nu_t(\phi_\pi) = \exp(t\Lambda \mu_X(\phi_\pi\circ \sigma)-t\Lambda)
    $$
    follows.
\end{proof}

\section{Fourier Coefficient Estimators}
\label{sec:FourierCoeffs}
\subsection{Statistical Assumptions}
A pressing question is in which sense information about $(p_t)_{t \geq 0}$ is to be assumed known. We will assume that a finite number $m \geq 1$ of independent observations of $p_t$ are available, for a fixed time $t > 0$. Note that independent sampling of this sort does not necessarily assume that observations have been made from different independent trajectories. By the stationarity and independence of the increments, it also covers the case when observations $p_{t_1},p_{t_2},\ldots,p_{t_m}$ have been made of a single trajectory $(p_t)_{t \geq 0}$, as long as the timesteps $t_{k}-t_{k-1}$ for $2 \leq k \leq m$ remain constant, which may be seen as a more reasonable assumption depending on the application. It would certainly be desired to develop ways of decompounding for more general sampling assumptions, but this makes the problem significantly more difficult. While the Lévy--Khinchin formula provides a relation between randomly sampled observations, it is ostensibly a situation where we are dealing with $m$ observations of $m$ different distributions. Decompounding methods in this situation is admittedly a gap in research which would be interesting to investigate, but for the purposes of this paper, we restrict attention to the following.
\begin{assumption}
\label{assumption:lowfrequency}
    Assume that there exists a sequence $(p_m)_{m \geq 1}$ of independent observations of the compound process $p_t$ observed at a fixed time $t > 0$.
\end{assumption}
We acknowledge the unfortunate clash in notation between the observations $(p_m)_{m \geq 1}$ of $p_t$ for $t$ fixed and the stochastic process $(p_t)_{t \geq 0}$, but this should not pose an issue when read in context.

Secondly, assumptions on the distribution of the steps $(x_n)_{n \geq 1}$ are needed. We will assume that the distribution of $x_n$ is sufficiently regular for it to admit a probability density $f_X \in L^2(G)$ with respect to the Haar measure $\lambda$. Necessary and sufficient conditions for this to hold is a classically studied problem and can be read about in more detail for example in \cite{Applebaum}. Since each $x_n$ is $K$-bi-invariant, it is easy to see that $f_X$ satisfies $f_X(xk) = f_X(kx) = f_X(x)$ for all $k \in K$. With slight abuse of notation, we therefore identify $f_X \in L^2_K(M)$ by setting $f_X(xK) = f_X(x)$.
\begin{assumption}
\label{assumption:density}
    The steps $(x_n)_{n \geq 1}$ are assumed to admit a $K$-bi-invariant probability density $f_X \in L^2(G)$.
\end{assumption}
Furthermore, it can be useful to assume that the steps $x_n$ are inverse invariant in distribution, i.e.\ $x_n \overset{d}{=} x_n^{-1}$. This is not a great restriction since many commonly appearing distributions are inverse invariant, but computations become much simpler as Proposition \ref{prop:LevyKhinchinWellPosed} below shows. We will return to the loosening of this assumption in Section \ref{subsec:inverseInvariance}. For now however, we assume the following.
\begin{assumption}
\label{assumption:Invariant}
The steps $(x_n)_{n\geq 1}$ are assumed to be inverse invariant, whence their probability density $f_X$ satisfies
$$
f_X(x)= f_X(x^{-1})
$$
for all $x \in G$.
\end{assumption}
As a consequence of Assumptions \ref{assumption:lowfrequency} and \ref{assumption:density}, the problem of decompounding can be seen as an inference problem in $L_K^2(M)$. Because of this, our first steps towards an inference method is to develop estimators of the Fourier coefficients $\langle f_X,\phi_\pi\rangle$ for $\pi \in \hat{G}_K$.

%In this paper, decompounding therefore refers to the problem of inferring $f_X$ under Assumptions \ref{assumption:lowfrequency} and \ref{assumption:densityandInvariant}. An approximation $f_X^{(m)} \in L_K^2(M)$ which uses only the $m$ first observations will be shown to satisfy that the error $\|f_X^{(m)}-f_X\|_{L^2(M)}$ converges to zero in probability and at a rate which depends only on intrinsic properties of $M$, see Theorem \ref{thm:decompounding} below.
\subsection{Inverse Invariant Estimation of the Fourier Coefficients}

With Proposition \ref{prop:LevyKhinchinForCompound}, a straightforward approach of estimating the distribution $\mu_X$ appears by performing the formal calculations
\begin{align}
\label{eq:inverseOfCompound}
\mu_X(\phi) = \langle f_X,\phi\rangle= \frac{1}{t\Lambda}\log(\nu_t(\phi))+1.
\end{align}
In turn, the inference problem is reduced to the typical scenario of estimating coefficients in a series expansion $f_X = \sum_{\pi \in \widehat{G}_K} d_\pi \langle f_X,\phi_\pi\rangle\phi_\pi$. Let us therefore motivate why this derivation is justified, and establish how to estimate $\nu_t(\phi)$. The latter point is easily achieved by observing that that the empirical averages
\begin{align}
\label{eq:empiricalAverage}
\nu_{m}(\pi) := \frac{1}{2m}\sum_{k = 1}^m\bigg(\phi_\pi(p_k)+\overline{\phi_\pi(p_k)}\bigg)
\end{align}
converge in mean squared error to $\nu_t(\phi_\pi)$ and the rate at which they do so is of order $m^{-1}$ as $m \to \infty$, where $\overline{z}$ denotes the complex conjugate of $z \in \mathbb{C}$. Point-wise almost sure convergence of \eqref{eq:empiricalAverage} is an immediate consequence of the law of large numbers, but it is the fact that the variance of $\phi_\pi(p_k)$ satisfies $$\MV[\phi_\pi(p_k)] := \ME[|\phi_\pi(p_k)|^2]-|\ME[\phi_\pi(p_k)]|^2 \leq 1$$
which cements the fact that the convergence is in mean squared error at the rate $m^{-1}$. In fact, the boundedness of $\frac{(\phi_\pi(p_k)+\overline{\phi_\pi(p_k)})}{2}$ in $[-1,1]$ will be repeatedly useful in proofs to come.

As Proposition \ref{prop:LevyKhinchinWellPosed} below shows, Equation \eqref{eq:inverseOfCompound} is justified formally.
\begin{proposition}
    \label{prop:LevyKhinchinWellPosed}
    Under Assumptions \ref{assumption:lowfrequency}, \ref{assumption:density}, and \ref{assumption:Invariant}, the integrals $\nu_t(\phi_\pi)$ are uniformly positive. That is,
    $$
    \inf_{\pi \in \hat{G}_K}\nu_t(\phi_{\pi}) > 0.
    $$
\end{proposition}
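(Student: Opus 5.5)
Our plan is to use the explicit Lévy--Khinchin formula of Proposition \ref{prop:LevyKhinchinForCompound},
$$
\nu_t(\phi_\pi) = \exp\big(t\Lambda\,\mu_X(\phi_\pi\circ\sigma) - t\Lambda\big),
$$
and to show that the exponent is real and bounded below uniformly in $\pi \in \hat{G}_K$. The exponential is then a positive real number bounded away from zero, independently of $\pi$.

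\textbf{Step 1: the coefficients are real.} By Assumption \ref{assumption:Invariant}, $x_n \overset{d}{=} x_n^{-1}$, so
$$
\mu_X(\phi_\pi\circ\sigma) = \ME[\phi_\pi(x_nK)] = \ME[\phi_\pi(x_n^{-1}K)].
$$
From the definition \eqref{eq:spherical} and unitarity of $\pi$,
$$
\phi_\pi(x^{-1}K) = \langle \pi(x)^{*}v_0,v_0\rangle_{V_\pi} = \langle v_0,\pi(x)v_0\rangle_{V_\pi} = \overline{\phi_\pi(xK)}.
$$
Hence $\mu_X(\phi_\pi\circ\sigma) = \overline{\mu_X(\phi_\pi\circ\sigma)}$, so $\mu_X(\phi_\pi\circ\sigma)\in\real$. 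This is exactly where Assumption \ref{assumption:Invariant} is used. Without it the exponent could be complex, $\nu_t(\phi_\pi)$ need not be a positive real, and the logarithm in \eqref{eq:inverseOfCompound} would require a branch choice.

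\textbf{Step 2: uniform bound on the exponent.} By Cauchy--Schwarz and unitarity, $|\phi_\pi(xK)| \le \|\pi(x)v_0\|\,\|v_0\| = 1$ for every $x$. Therefore $\mu_X(\phi_\pi\circ\sigma)\in[-1,1]$, and the exponent lies in $[-2t\Lambda, 0]$. It follows that
$$
\nu_t(\phi_\pi) \ge e^{-2t\Lambda} > 0 \quad \text{for all } \pi\in\hat{G}_K,
$$
which gives $\inf_{\pi}\nu_t(\phi_\pi)\ge e^{-2t\Lambda}$.

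There is no real obstacle here. The only point requiring care is Step 1: one has to verify that the formula of Proposition \ref{prop:LevyKhinchinForCompound}, derived for matrix coefficients on $G$, indeed specializes to the scalar spherical coefficients. This holds because $\nu_t(\phi_\pi) = \langle \mu_t(\pi)v_0,v_0\rangle$ and $\mu_X(\pi)$ acts on the $K$-fixed line, by $K$-bi-invariance, as the scalar $\mu_X(\phi_\pi\circ\sigma)$.

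Assumption \ref{assumption:density} is not needed for the bound itself. Optionally, one can add that it gives $\mu_X(\phi_\pi\circ\sigma) = \langle f_X,\phi_\pi\rangle \to 0$ as $\pi\to\infty$ by Bessel's inequality. Then $\nu_t(\phi_\pi)\to e^{-t\Lambda}$, which further confirms that the infimum is not approached along high frequencies.
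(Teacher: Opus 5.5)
Your proof is correct. Step 1 is the paper's own argument, phrased as an expectation over $x_n$ rather than as an integral against $f_X$ using inversion invariance of $\lambda$: inverse invariance combined with $\overline{\phi_\pi(xK)}=\phi_\pi(x^{-1}K)$ from unitarity.

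You depart from the paper in the uniform part. The paper uses a limit argument. Since $f_X\in L^2_K(M)$ has a convergent spherical expansion, the coefficients $\langle f_X,\phi_\pi\rangle$ tend to $0$, so $\nu_t(\phi_\pi)$ cannot approach $0$ along $\hat{G}_K$. Together with positivity of each individual term, this gives a positive infimum over the countable set $\hat{G}_K$. You instead use the pointwise bound $|\phi_\pi|\le 1$ and get the explicit constant $\inf_\pi\nu_t(\phi_\pi)\ge e^{-2t\Lambda}$ directly.

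What your route buys:
\begin{itemize}
\item It needs no limit argument and does not use Assumption \ref{assumption:density} at all.
\item It makes transparent why no degeneration can occur. The exponent $t\Lambda(\mu_X(\phi_\pi\circ\sigma)-1)$ would have to diverge to $-\infty$, which $|\mu_X(\phi_\pi\circ\sigma)|\le 1$ forbids. The paper's wording, that the coefficients ``would have to tend to $1$'', is a slip that obscures this.
\end{itemize}

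What the paper's route buys is the asymptotic statement $\nu_t(\phi_\pi)\to e^{-t\Lambda}$. This, not a mere lower bound, is what Section \ref{subsec:inverseInvariance} relies on when inverse invariance is dropped, to place $\nu_t(\phi_\pi)$ eventually in the right half-plane. Your optional Bessel remark recovers exactly this.

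Your check that the matrix-valued formula reduces to the scalar one is also worth keeping. It rests on the fact that $K$-invariance forces $\mu_X(\pi)v_0=\mu_X(\phi_\pi\circ\sigma)\,v_0$. The paper only asserts this ``by identical argumentation''.
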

\begin{proof}
    In view of Equation \eqref{eq:LevyKhinchinCompoundM}, it suffices to show that $\langle f_X,\phi_\pi\rangle$ is real to show that $\nu_t(\phi_\pi)$ is positive. Note that since $\pi(x)$ is unitary for all $x \in G$,
    $$
    \langle \pi(x^{-1})v_0,v_0\rangle = \langle \pi(x)^*v_0,v_0\rangle = \langle v_0,\pi(x)v_0\rangle = \overline{\langle \pi(x)v_0,v_0\rangle}.
    $$
    In other words, $\overline{\phi_\pi(xK)} = \phi_\pi(x^{-1}K)$. Combining this fact with the inverse invariance of the Haar measure $\lambda$, the following computations are justified:
    \begin{align*}
        \langle f_X,\phi_\pi\rangle& = \int_Gf_X(x)\overline{\phi_\pi(x^{}K)}\,d\lambda(x)\\
        & = \int_G f_X(x^{-1})\overline{\phi_\pi(x^{-1}K)}\,d\lambda(x)\\
        & = \int_G f_X(x)\phi_\pi(xK)\,d\lambda(x)\\
        & = \langle \phi_\pi,f_X\rangle,
    \end{align*}
    where the last equality follows from the trivial observation that $f_X$ is real-valued. The fact that $\nu_t(\phi_\pi)$ preserves a uniform distance to zero is also a simple consequence of Equation \eqref{eq:LevyKhinchinCompoundM}: For any limit in $\hat{G}_K$, the integrals $\nu_t(\phi_\pi)$ can only tend to $0$ if $\langle f_X,\phi_\pi\rangle$ tends to $1$, but this is manifestly not the case since $f_X \in L_K^2(M)$ has a convergent series expansion in the sphericals $\phi_\pi$.
\end{proof}
With Proposition \ref{prop:LevyKhinchinWellPosed}, we are ready to construct an estimator of $\langle f_X,\phi_\pi\rangle$.
\begin{proposition}
\label{prop:L1Estimator}
    Let $\delta >0$. The estimators
    $$
    c_m(\pi) := \begin{cases}
        \frac{1}{t\Lambda}\log(\nu_m(\pi))+1 & \textup{if } \nu_m \geq \frac{\delta}{m}\\
        0 & \textup{otherwise}
    \end{cases}
    $$
    converge in expectation towards $\langle f_X,\phi_\pi\rangle$. Explicitly, there exist constants $c,C_1,C_2,C_3 > 0$ such that the bound
    \begin{align}
    \label{eq:UniformBound}
    \ME[|c_m(\pi)-\langle f_X,\phi_\pi\rangle|^2] \leq C_1\frac{1}{{m}}+C_2\log(m)^2e^{-cm}+C_3e^{-cm}
    \end{align}
    holds uniformly for varying $\pi \in \hat{G}_K$
\end{proposition}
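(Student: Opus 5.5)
Write $\nu := \nu_t(\phi_\pi)$ and $a := \langle f_X,\phi_\pi\rangle$. By Proposition \ref{prop:LevyKhinchinWellPosed} and Equation \eqref{eq:LevyKhinchinCompoundM}, $\nu = \exp(t\Lambda(a-1))$ is real, and $\nu \geq \nu_* := \inf_{\pi\in\hat{G}_K}\nu_t(\phi_{\pi'})>0$. Also $\nu \leq 1$, since $|a|\leq 1$ because $|\phi_\pi|\leq 1$ and $f_X$ is a density. Hence $a = \frac{1}{t\Lambda}\log \nu + 1$ exactly.

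The estimator $\nu_m(\pi)$ is an average of the i.i.d.\ real variables $Y_k := \Re\,\phi_\pi(p_k) \in [-1,1]$, and $\ME Y_k = \Re\,\nu = \nu$. I split the expectation on the event $A := \{\nu_m(\pi) \geq \nu/2\}$ and its complement. For $m$ large enough that $\delta/m \leq \nu_*/2$ (smaller $m$ are absorbed into the constants), the event $A$ lies inside the region where $c_m(\pi)$ is given by the logarithm.

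On $A$ I use the mean value theorem for $\log$ on $[\nu/2,1]$. This gives
$$
|\log\nu_m(\pi)-\log\nu| \leq \frac{2}{\nu_*}|\nu_m(\pi)-\nu|.
$$
Therefore
$$
\ME[|c_m-a|^2\chi_A] \leq \frac{4}{(t\Lambda\nu_*)^2}\,\MV[\nu_m(\pi)] \leq \frac{4}{(t\Lambda\nu_*)^2 m}.
$$
This uses $\MV[Y_k]\leq 1$, and it yields the $C_1/m$ term uniformly in $\pi$.

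On $A^c$, Hoeffding's inequality for the bounded $Y_k$ gives
$$
\MP(A^c) \leq \MP(\nu_m-\nu \leq -\nu_*/2) \leq e^{-m\nu_*^2/8},
$$
so I take $c = \nu_*^2/8$. On $A^c$ there are two cases.

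If $\nu_m < \delta/m$, then $c_m = 0$, so $|c_m-a|^2 \leq 1$. This contributes at most $C_3e^{-cm}$.

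If $\delta/m \leq \nu_m < \nu/2$, then $c_m$ is the logarithmic expression and $\nu_m\in[\delta/m,1]$. Hence $|\log \nu_m| \leq \log(m/\delta)$, and so
$$
|c_m-a|^2 \lesssim \frac{\log(m)^2}{(t\Lambda)^2} + 1.
$$
Multiplying by $\MP(A^c)$ gives the $C_2\log(m)^2e^{-cm}$ term, plus another contribution to $C_3e^{-cm}$. This is exactly why the threshold $\delta/m$ is built into $c_m$: it keeps the logarithm from blowing up near zero.

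The main obstacle is uniformity in $\pi$. Every constant above depends only on $\nu_*$, $t\Lambda$ and $\delta$, and the uniform lower bound $\nu_*$ from Proposition \ref{prop:LevyKhinchinWellPosed} supplies this. The remaining care is the small-$m$ regime where $\delta/m > \nu_*/2$. There I bound the error crudely: on $\{\nu_m\geq\delta/m\}$ the error is $\lesssim \log(m/\delta)^2+1$, and otherwise it is at most $1$. Since only finitely many such $m$ occur, enlarging $C_1$ absorbs these cases.
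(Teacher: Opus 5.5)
Your proposal is correct and follows the paper's proof essentially step for step: the same split into $\{\nu_m \geq \nu/2\}$, $\{\delta/m \leq \nu_m < \nu/2\}$ and $\{\nu_m < \delta/m\}$, the Lipschitz bound for $\log$ combined with the unit variance bound on the good event, Hoeffding together with the crude $\log(m/\delta)^2$ bound on the bad events, and uniformity in $\pi$ via Proposition~\ref{prop:LevyKhinchinWellPosed}. Your explicit handling of the finitely many $m$ with $\delta/m > \nu_*/2$ (which the paper covers by ``without loss of generality''), your sharper Hoeffding constant, and your remark that $|a|\le 1$ are welcome refinements; the last in fact gives $\nu_* \geq e^{-2t\Lambda}$ directly. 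None of these changes the argument.
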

\begin{proof}
    The main property that will be utilized is the fact that $\frac{(\phi_\pi(p_k)+\overline{\phi_\pi(p_k)})}{2}$ are real random variables that are bounded by the interval $[-1,1]$. The truncating sequence $(\frac{\delta}{m})_{m \geq 1}$ is introduced to have control over the behaviour of events where $\log(\nu_m)$ is singular.

    For brevity, let $\phi = \phi_\pi$ be fixed and write $\nu(\phi) := \nu_t(\phi)$. Let $U_m$ be the event that $\nu_m \geq \frac{\nu(\phi)}{2}$, which without loss of generality may be assumed to be contained in the event that $\nu_m \geq \frac{\delta}{m}$. Then
    \begin{align}
    \label{eq:eqForProp}
    \ME[|c_m-\langle f_X,\phi\rangle|^2] = \int_{U_m}|c_m-\langle f_X,\phi\rangle|^2\,d\MP + \int_{\frac{\delta}{m} \leq \nu_m<\frac{\nu(\phi)}{2}}|c_m-\langle f_X,\phi\rangle|^2\,d\MP + \int_{\nu_m < \frac{\delta}{m}}|\langle f_X,\phi\rangle|^2\,d\MP.
    \end{align}
    For $a,b \geq \frac{\nu(\phi)}{2}$ we have the Lipschitz bound
    $$
    |\log(a)-\log(b)|\leq\frac{2}{\nu(\phi)}|a-b|,
    $$
    whence
    $$
    \int_{U_m}|c_m-\langle f_X,\phi\rangle|^2\,d\MP \leq \Big(\frac{2}{t\Lambda \nu(\phi)}\Big)^2\int_{U_m}|\nu_m-\nu(\phi)|^2\,d\MP.
    $$
    This tends to zero of order $\frac{1}{m}$ as $m \to \infty$ since $\nu_m$ is the empirical average of variables with at most unit variance. The second integral appearing in \eqref{eq:eqForProp} can be rewritten as
    $$
    \int_{\frac{\delta}{m} \leq \nu_m<\frac{\nu(\phi)}{2}}|c_m-\langle f_X,\phi\rangle|^2\,d\MP = \frac{1}{(t\Lambda)^2}\int_{\frac{\delta}{m} \leq \nu_m < \frac{\nu(\phi)}{2}}|\log(\frac{\nu_m}{\nu(\phi)})|^2\,d\MP \leq \frac{1}{(t\Lambda)^2}|\log(\frac{\delta}{\nu(\phi)m})|^2\MP(\frac{\delta}{m} \leq \nu_m < \frac{\nu(\phi)}{2}).
    $$
    The probability $\MP(\frac{\delta}{m} \leq \nu_m < \frac{\nu(\phi)}{2}) \leq \MP(\nu_m < \frac{\nu(\phi)}{2})$ is approximated by Hoeffding's inequality as
    $$
    \MP(\nu_m-\nu(\phi) < -\frac{\nu(\phi)}{2}) \leq \exp\bigg(-\frac{\nu(\phi)^2}{16}m\bigg)
    $$
    which, again, is applicable since the summands $\frac{(\phi_\pi(p_k)+\overline{\phi_\pi(p_k)})}{2}$ in $\nu_m$ are bounded in $[-1,1]$. In particular, the exponential decay dominates $|\log(\frac{1}{m})|^2$ as $m$ tends to infinity, so the second term also tends to zero. The last integral $|\langle f_X,\phi\rangle|^2 \MP(\nu_m < \frac{\delta}{m})$ that appears in Equation \eqref{eq:eqForProp} is easily seen to converge to zero at an exponential rate by similar approximation.

    The fact that a uniform bound such as Equation \eqref{eq:UniformBound} exists is seen by further bounding the above inequalities using that $\inf_{\pi \in \hat{G}_K} \nu(\phi_\pi)> 0$. To derive the exact form of the bound is simply a matter of elementary computations.
\end{proof}
The truncating sequence $(\frac{\delta}{m})_{m \geq 1}$ used to handle the singularity of $\log$ close to zero is needed for the proof above. However, convergence in probability can be preserved by the more basic estimator which only truncates the value to zero if $\log(\nu_m)$ is not defined. A similar estimator is constructed in \cite{Said} for Lie groups, where the authors apply Chebychev's inequality to argue for the convergence. The convergence rate can be made stronger by similar argumentation to that of Proposition \ref{prop:L1Estimator}, however. 
\begin{proposition}
\label{prop:ConvergenceInProbability}
    The estimator
    $$
    c_m(\pi) := \begin{cases}
        \frac{1}{\Lambda t}\log(\nu_m(\pi))+1 & \textup{if } \nu_m(\pi) > 0,\\
        0& \textup{if } \nu_m(\pi) \leq 0,
    \end{cases}
    $$
    converges in probability towards $\langle f_X,\phi_\pi\rangle$ exponentially fast as $m \to \infty$ and it does so uniformly in $\pi \in \hat{G}_K$. That is, for all $\eps > 0$ and any $\pi \in \hat{G}_K$, 
    $$
    \MP(|c_m(\pi)-\langle f_X,\phi_\pi\rangle|> \eps) \leq Ce^{-cm}
    $$
    for some positive constants $c,C > 0$ that depend on $\eps$ but not on $\pi$.
\end{proposition}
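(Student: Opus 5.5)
The plan is to fix $\pi \in \hat{G}_K$, write $\phi = \phi_\pi$ and $\nu := \nu_t(\phi_\pi)$, and set $\nu_* := \inf_{\pi \in \hat{G}_K}\nu_t(\phi_\pi) > 0$. This infimum is positive by Proposition \ref{prop:LevyKhinchinWellPosed}. I will split the event $\{|c_m(\pi)-\langle f_X,\phi_\pi\rangle|>\eps\}$ along the good event $U_m := \{\nu_m(\pi) \geq \nu/2\}$, exactly as in the proof of Proposition \ref{prop:L1Estimator}. This gives
$$
\MP(|c_m-\langle f_X,\phi\rangle|>\eps) \leq \MP(U_m^c) + \MP\big(U_m \cap \{|c_m-\langle f_X,\phi\rangle|>\eps\}\big).
$$
On $U_m^c$ nothing needs to be controlled about $c_m$ itself: whether $\nu_m \leq 0$ (so that $c_m = 0$) or $0<\nu_m<\nu/2$, we simply bound by the probability of $U_m^c$.

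For the first term, I will apply Hoeffding's inequality to the real summands $\tfrac12(\phi(p_k)+\overline{\phi(p_k)}) \in [-1,1]$. Their mean is $\nu$, since $\nu$ is real by Proposition \ref{prop:LevyKhinchinWellPosed}. This yields
$$
\MP(U_m^c) \leq \exp(-\nu^2 m/8) \leq \exp(-\nu_*^2 m/8).
$$
The bound is uniform in $\pi$ because $\nu \geq \nu_*$.

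For the second term, on $U_m$ we have $\nu_m>0$, so $c_m = \frac{1}{\Lambda t}\log\nu_m + 1$. By \eqref{eq:inverseOfCompound} and \eqref{eq:LevyKhinchinCompoundM}, $\langle f_X,\phi\rangle = \frac{1}{\Lambda t}\log \nu + 1$. The Lipschitz bound for $\log$ on $[\nu/2,\infty)$ then gives
$$
|c_m-\langle f_X,\phi\rangle| \leq \frac{2}{\Lambda t\,\nu}|\nu_m-\nu| \leq \frac{2}{\Lambda t\,\nu_*}|\nu_m-\nu|.
$$
Hence this event is contained in $\{|\nu_m-\nu| > \eps\Lambda t\nu_*/2\}$. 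A two-sided Hoeffding bound controls it by
$$
2\exp\big(-(\eps\Lambda t\nu_*)^2 m/8\big).
$$

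Summing the two estimates gives $C e^{-cm}$ with $C = 3$ and $c = \min(\nu_*^2,(\eps\Lambda t\nu_*)^2)/8$. Both constants depend on $\eps$, $\Lambda$, $t$ and $\nu_*$, but not on $\pi$, as required. The only delicate point is this uniformity in $\pi$. It rests entirely on the uniform lower bound $\nu_*>0$ from Proposition \ref{prop:LevyKhinchinWellPosed}, which controls both the Lipschitz constant of $\log$ and the Hoeffding deviation threshold. Everything else is routine.
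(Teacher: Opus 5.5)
Your proposal is correct and follows the paper's proof essentially step for step. Both arguments split along the event $\{\nu_m \geq \nu/2\}$, use a one-sided Hoeffding bound for its complement, use the Lipschitz bound for $\log$ on $[\nu/2,\infty)$ followed by a two-sided Hoeffding bound, and get uniformity in $\pi$ from the infimum in Proposition \ref{prop:LevyKhinchinWellPosed}. Two small differences are in your favour: your exponent $\nu_*^2 m/8$ is exactly what Hoeffding gives for summands in $[-1,1]$ (the paper's $1/16$ is weaker but still valid), and by working with $\nu_*$ throughout you make the $\pi$-independence of $c$ explicit, which the paper only asserts at the end.
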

\begin{proof}
    Again, let $\phi = \phi_\pi$ and $\nu = \nu_t$ for brevity. The idea of the proof is to mimic that of the Lie group case, see \cite[Proposition 6]{Said}. Introduce the event
    $$
    U_m := \{\nu_m > \frac{\nu(\phi)}{2}\}.
    $$
    like before. Note that
    $$
    \MP(|c_m-\langle f_X,\phi\rangle |> \eps) \leq \MP(U_m^c)+ \MP(\{|c_m-\langle f_X,\phi\rangle|> \eps\}\cap U_m).
    $$
    The probability $\MP(U_m^c)$ can be bounded above by an application of the Hoeffding's inequality since
    $$
    \MP(\nu_m \leq \frac{\nu(\phi)}{2}) = \MP(\nu_m-\nu(\phi) \leq -\frac{\nu(\phi)}{2}) \leq \exp\bigg(-\frac{\nu(\phi)^2}{16}m\bigg).
    $$
    In $U_m$, we utilize the Lipschitz continuity of $\log$ in $[\frac{\nu(\phi)}{2},+\infty)$ as in the proof of Proposition \ref{prop:L1Estimator} to see that 
    \begin{align*}
        |c_m-\langle f_X,\phi\rangle| & = \frac{1}{\Lambda t}|\log(\nu_m)-\log(\nu(\phi))|\\
        & \leq \frac{2}{t\Lambda \nu(\phi)}|\nu_m-\nu(\phi)|.
    \end{align*}
    This inequality helps us derive the following upper bound on the term $\MP(\{|c_m-\langle f_X,\phi\rangle|> \eps\}\cap U_m)$ that appears above: 
    \begin{align*}
       \MP(\{|c_m-\langle f_X,\phi\rangle|> \eps\}\cap U_m)
        & \leq \MP(\frac{2}{t\Lambda\nu(\phi)}|\nu_m-\nu(\phi)| > \eps)\\
        & = \MP(|\nu_m-\nu(\phi)| > \frac{\eps\Lambda t\nu(\phi)}{2})\\
        & \leq 2\exp\bigg(-\frac{(\eps \Lambda t \nu(\phi)) ^2}{16}m\bigg),
    \end{align*}
    where in the last inequality, a two-sided Hoeffding's inequality is applied. Arguing from Proposition \ref{prop:LevyKhinchinWellPosed} yet again, it is easy to see that the infimum among $\nu(\phi_\pi)$ in $\hat{G}_K$ provide uniform upper bounds that preserve the exponential decay.
\end{proof}
As is seen in the proof above, the constant $C > 0$ for which $\MP(|c_m(\pi)-\langle f_X,\phi_\pi\rangle|> \eps) \leq Ce^{-cm}$ certainly satisfies $C \leq 3$. Of course, this is a rather crude bound, but it can nonetheless be used to bound the actual probability that the error threshold $|c_m(\pi) - \langle f,\phi_\pi\rangle| > \eps$ has been trespassed.

\subsection{Estimation Without Inverse Invariance}
\label{subsec:inverseInvariance}
Assuming that the distribution of $f_X$ is inverse invariant is beneficial in simplifying computational aspects since it implies that $\log(\nu(\phi))$ is well-defined as a real logarithm by Proposition \ref{prop:LevyKhinchinWellPosed}. Although this is not a mathematically essential assumption. Indeed, the values of
$$
\nu_t(\phi_\pi) = e^{t\Lambda \langle f_X,\phi_\pi\rangle}e^{-t\Lambda}
$$
will eventually stay within a complex neighbourhood of $e^{-t\Lambda}$ contained entirely on the positive real half plane. In other words, the principal branch logarithm $\Log(\nu_t(\phi_\pi))$ is eventually well-defined to give
$$
\langle f_X,\phi_\pi\rangle = \frac{1}{t\Lambda}\Log(\nu_t(\phi_\pi))+1.
$$
It is more or less clear how to extend the estimator of Proposition \ref{prop:L1Estimator} to this case, by truncating if the empirical average
$$
\nu_m(\pi) = \frac{1}{m}\sum_{k = 1}^m\phi_\pi(p_k)
$$
does not lie sufficiently close to $e^{-t\Lambda}$. Since $\phi_\pi$ remains bounded in the unit disc, we can still apply Hoeffding inequalities as in the proof of Proposition \ref{prop:L1Estimator} and the convergence rates are identical. We summarize these findings and complete the proof as follows.
\begin{proposition}
\label{prop:L1EstimatorInvarianceRemoved}
    Under Assumptions \ref{assumption:lowfrequency} and \ref{assumption:density}, let 
    $$
    \nu_m(\pi) = \frac{1}{m}\sum_{k = 1}^m \phi_\pi(p_k)
    $$
    and consider the estimator
    $$
    c_m(\pi) := \begin{cases}
        \frac{1}{t\Lambda}\Log(\nu_m(\pi))+1, & \textup{if } \mathfrak{Re}(\nu_m)>0 \textup{ and } |\nu_m(\pi)| \geq \frac{\delta}{m}\\
        0, & \textup{otherwise}
    \end{cases}
    $$
    for a fixed $\delta > 0$. Then 
    $$
    \ME|c_m(\pi)-\langle f_X,\phi_\pi\rangle|^2\leq  C \frac{1}{m}
    $$
    for a constant $C$ which does not depend on $\pi \in \hat{G}_K$.
\end{proposition}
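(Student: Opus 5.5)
The argument follows the proof of Proposition \ref{prop:L1Estimator}, with the real logarithm replaced by the principal branch $\Log$. Write $\phi=\phi_\pi$, $\nu=\nu_t(\phi)$ and $a=\langle f_X,\phi\rangle$, so that $\nu=e^{t\Lambda(a-1)}$ by Equation \eqref{eq:LevyKhinchinCompoundM}.

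\emph{Step 1: a uniform good region.} Since $f_X\in L^2_K(M)$ has a convergent expansion in the $d_\pi^{1/2}\phi_\pi$, we have $d_\pi^{1/2}|a_\pi|\to 0$, hence $a_\pi\to 0$. So $\nu_t(\phi_\pi)\to e^{-t\Lambda}$ along any limit in $\hat G_K$, and only finitely many $\pi$ have $|\Im(t\Lambda a_\pi)|\ge\pi/2$. For those finitely many $\pi$ I would either assume the principal branch is valid (which is what the paper asserts) or handle them one at a time, since finitely many constants do not affect uniformity. For the remaining $\pi$, $\Log\nu=t\Lambda(a-1)$ exactly, and there are constants $\rho>0$ and $\theta<\pi/2$, independent of $\pi$, with $|\nu|\ge 2\rho$ and $\nu$ lying in a sector $|\arg z|\le\theta$ at distance at least $2\rho$ from the imaginary axis.

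Define the good event
$$U_m=\{|\nu_m-\nu|<\rho\}.$$
On $U_m$ we have $\Re\nu_m>0$ and $|\nu_m|\ge\rho\ge\delta/m$ for $m$ large, so the estimator takes its logarithmic branch. Moreover the segment from $\nu$ to $\nu_m$ stays in a convex region where $\Log$ is holomorphic with $|\Log'|=1/|z|\le 1/\rho$. Hence
$$|c_m-a|=\tfrac{1}{t\Lambda}|\Log\nu_m-\Log\nu|\le\tfrac{1}{t\Lambda\rho}|\nu_m-\nu|.$$
Since $|\phi|\le1$, the summands have variance at most $1$, and so
$$\ME[|\nu_m-\nu|^2]\le\tfrac1m.$$
This gives the $C/m$ contribution.

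\emph{Step 2: the bad event.} Apply Hoeffding's inequality separately to the real and imaginary parts, each bounded in $[-1,1]$. This gives
$$\MP(U_m^c)\le 4e^{-c m}, \qquad c=\rho^2/4,$$
uniformly in $\pi$. On $U_m^c$ there are two cases.
\begin{itemize}
\item If the estimator truncates to $0$, the error is $|a|^2\le1$, because $|a|\le\int f_X|\phi|\,d\lambda\le 1$.
\item Otherwise $\Re\nu_m>0$ and $|\nu_m|\ge\delta/m$. Then $|\Log\nu_m|\le|\log|\nu_m||+\pi/2\le\log(m/\delta)+\pi/2$, using $|\nu_m|\le1$. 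So the squared error is at most $C(\log m)^2$.
\end{itemize}
In both cases the contribution is at most $C(1+\log(m)^2)e^{-cm}\le C'/m$.

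\emph{Main obstacle.} The delicate step is Step 1's uniformity. Unlike the inverse invariant case, $\nu_t(\phi_\pi)$ may lie off the real axis, and $\Log\nu$ equals $t\Lambda(a-1)$ only if $|\Im(t\Lambda a)|<\pi$. The estimate above is meaningful exactly where the branch choice is correct, and this is guaranteed by $a_\pi\to0$ for all but finitely many $\pi$.
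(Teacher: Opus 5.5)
The gap is in Step 1, in how you treat the finitely many $\pi$ with $|t\Lambda\,\mathfrak{Im}\,a_\pi|\ge \frac{\pi}{2}$. These cannot be ``handled one at a time''. For such $\pi$ there is in general no constant $C_\pi$ with $\ME|c_m(\pi)-a_\pi|^2\le C_\pi/m$, because the estimator is inconsistent there:
\begin{itemize}
\item If $\Re\nu_t(\phi_\pi)<0$, Hoeffding gives $\MP(\Re\nu_m>0)\to0$ exponentially fast. So $c_m(\pi)=0$ with probability tending to one, and the error tends to $|a_\pi|^2>0$.
\item If $\Re\nu_t(\phi_\pi)>0$ but $|t\Lambda\,\mathfrak{Im}\,a_\pi|>\pi$, then $c_m(\pi)\to\frac{1}{t\Lambda}\Log\nu_t(\phi_\pi)+1\neq a_\pi$.
\end{itemize}
This does occur. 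Take $M=\mathbb{T}$, $f_X$ concentrated near the angle $\frac{\pi}{2}$, and $t\Lambda=2$. Then $a_1\approx -i$ and $\nu_t(\phi_1)\approx e^{-2}e^{\pm 2i}$, which lies in the left half-plane even though the principal branch is the correct one. So assuming only that the branch is valid does not suffice.

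What the argument actually needs is the hypothesis $|t\Lambda\,\mathfrak{Im}\,a_\pi|<\frac{\pi}{2}$ for \emph{every} $\pi\in\hat{G}_K$. A clean sufficient condition is $t\Lambda<\frac{\pi}{2}$: since $|a_\pi|\le 1$, it gives $\Re\nu_t(\phi_\pi)\ge e^{-2t\Lambda}\cos(t\Lambda)$ for all $\pi$. Your $\rho$ is then explicit and Step 1 has no exceptional set. The paper's proof makes the same assumption silently (it says $\nu(\phi)$ is ``assumed to be part of the positive real half plane''). So this is a defect of the statement that you inherit, not one your route introduces.

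Apart from this, your argument is the paper's: a good event on which $\Log$ is Lipschitz, Hoeffding on real and imaginary parts for its complement, and the $\delta/m$ truncation turning the remaining event into a $(\log m)^2e^{-cm}$ term. Your disc $U_m=\{|\nu_m-\nu|<\rho\}$ is actually the better choice than the paper's $U_m=\{\Re\nu_m>0,\ |\nu_m|>|\nu|/2\}$, for two reasons. First, the paper's set is not convex, so its Lipschitz constant for $\Log$ is not immediate. Second, the paper bounds $\MP(|\nu_m|\le|\nu|/2)$ by a sum containing $\MP(\mathfrak{Im}\,\nu_m<|\nu|/2)$, which tends to $1$ when $\nu$ is near the positive real axis. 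Bounding $\MP(|\nu_m-\nu|\ge\rho)$ directly avoids both problems.

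One further inherited slip concerns conjugation. By the computation in Proposition~\ref{prop:LevyKhinchinForCompound}, $\ME\,\phi_\pi(p_t)=\exp\bigl(t\Lambda(\overline{a_\pi}-1)\bigr)$. So with $\nu_m=\frac1m\sum_k\phi_\pi(p_k)$, your two uses of $\nu$ (as $e^{t\Lambda(a-1)}$ and as $\ME\nu_m$) differ by a conjugation, and $c_m(\pi)$ really estimates $\overline{a_\pi}$. Averaging $\overline{\phi_\pi(p_k)}$ instead repairs this.
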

\begin{proof}
    The idea of the proof is the same as that of Proposition \ref{prop:L1Estimator}. We will introduce an event $U_m$ where the estimator has good convergence properties, an event $V_m$ disjoint to $U_m$ where the truncation at $|\nu_m| \geq \frac{\delta}{m}$ handles the singularity of $\Log$ with care. The probability of the complement $(U_m \cup V_m)^c$ will then be shown to be sufficiently small to not influence the rate of convergence in $U_m$.
    
    Introduce the event $U_m$ defined as the set of outcomes for which $\nu_m$ has positive real part and that $|\nu_m| > \frac{|\nu(\phi)|}{2}$. In $U_m$ we have the Lipschitz bound
    $$
    |c_m-\langle f_X,\phi\rangle|^2 = (\frac{1}{t\Lambda})^2|\Log(\nu_m)-\Log(\nu(\phi))|^2 \leq (\frac{2}{t\Lambda\nu(\phi)})^2|\nu_m-\nu(\phi)|^2.
    $$
    From this, it is clear that $\ME[ \chi_{U_m}\cdot |c_m-\langle f_X,\phi\rangle|^2]$ tends to $0$ at a rate of $m^{-1}$. Proceeding with the remaining possible outcomes, define $V_m$ as the intermediate set of points that are not in $U_m$, but where the estimator $c_m$ nonetheless is not truncated to zero. That is,
    $$
    V_m = \{\nu_m\notin  U_m\}\cap \{ |\nu_m| \geq \frac{\delta}{m}\} \cap \{\Re(\nu_m) > 0\}
    $$
    consists of the events where $\frac{\delta}{m} \leq |\nu_m| \leq \frac{|\nu(\phi)|}{2}$ and where $\nu_m$ has strictly positive real part. Then,
    $$
    \ME[\chi_{V_m}\cdot |c_{m}-\langle f_X,\phi\rangle|^2] = 
    (\frac{1}{t\Lambda})^2\int_{V_m}|\Log(\frac{\nu_m}{\nu(\phi)})|^2\,d\MP \leq (\frac{1}{t\Lambda})^2|\Log(\frac{\delta}{\nu(\phi)m})|\MP(|\nu_m|\leq \frac{|\nu(\phi)|}{2})
    $$
    where we note that these inequalities hold only because $\nu(\phi)$ and $\nu_m$ are assumed to be part of the positive real half plane. In the final inequality we have also used the fact that $\MP(V_m) \leq \MP(|\nu_m|\leq \frac{|\nu(\phi)|}{2})$.

    Now, $|\nu_m| \leq \frac{|\nu(\phi)|}{2}$ holds if and only if $\Re(\nu_m)^2+\Im(\nu_m)^2 \leq \frac{|\nu(\phi)|^2}{4}$ which in particular implies that $\Re(\nu_m), \Im(\nu_m) \leq \frac{|\nu(\phi)|}{2}$. Hence,
    $$
    \MP(|\nu_m|\leq \frac{|\nu(\phi)|}{2}) \leq \MP(\Re(\nu_m) < \frac{|\nu(\phi)|}{2})+\MP(\Im(\nu_m) < \frac{|\nu(\phi)|}{2})
    $$
    and these probabilities are easily seen to converge to zero at an exponential rate, uniformly in $\pi$ by identical argumentation to the proof of Proposition \ref{prop:L1Estimator}.

    Lastly, the probability of the event $(U_m \cup V_m)^c$ is given by $\MP(\Re(\nu_m) \leq 0 \textup{ or } |\nu_m| < \frac{\delta}{m})$
    and is evidently smaller than the probability $\MP(\Re(\nu_m) < \frac{|\nu(\phi)|}{2})$ which tends to zero exponentially fast by the above argument. Putting the established inequalities together, we obtain that
    $$
    \ME|c_m-\langle f_X,\phi\rangle|^2 \leq \ME[\chi_{U_m}\cdot |c_m-\langle f_X,\phi\rangle|^2]+\ME[\chi_{V_m}\cdot |c_m-\langle f_X,\phi\rangle|^2] + |\langle f_X,\phi\rangle|^2\MP[(U_m\cup V_m)^c] \lesssim m^{-1}
    $$
    holds uniformly in $\hat{G}_K$.
\end{proof}

\section{Decompounding Estimators}
\label{sec:Decompounding}
Propositions \ref{prop:L1Estimator} and \ref{prop:ConvergenceInProbability} establish estimates of the Fourier coefficients of $f_X$. To estimate $f_X$, we also need to treat the number of terms to include in a series expansion to ensure that the errors do not compound in the limit. The natural choice is to let
\begin{align}
    \label{eq:mainEstimator}
    f_X^{(m)} := \sum_{\pi \in F_m}d_\pi c_m(\pi)\phi_\pi
\end{align}
be an estimator of $f_X$ for a non-decreasing sequence $(F_m)_{m \geq 1}$ of non-empty subsets of $\hat{G}_K$ whose union is $\hat{G}_K$. We will call such a sequence an absorbing sequence in $\hat{G}_K$. Then, the error in $L^2$-norm $\|f_X^{(m)}-f_X\|^2$ can be decomposed in the typical fashion 
$$
\|f_X^{(m)}-f_X\|^2 = \sum_{\pi \in F_m}d_\pi|c_m(\pi)-\langle f_X,\phi_\pi\rangle|^2 + \sum_{\pi \notin F_m}d_\pi|\langle f_X,\phi_\pi\rangle|^2 = V_{m} + B_{m}
$$
into a variance term $V_{m} := \sum_{\pi \in F_m}d_\pi|c_m(\pi)-\langle f_X,\phi_\pi\rangle|^2$ and a bias term $B_m := \sum_{\pi \notin F_m}d_\pi|\langle f_X,\phi_\pi\rangle|^2$.
 This section will provide bounds on the growth of absorbing sequences for which convergence of the estimator $f_X^{(m)}$ is preserved.

We fix an adjoint-invariant Riemannian metric on $G$, e.g.\ one induced from the Killing form \cite[Chapter II]{Humphreys}, and consider its corresponding Laplace--Beltrami-operator $\Delta$ on $G$. It lifts to a Laplacian on $M$ via pullback by $\sigma$. With abuse of notation, we shall denote both of these Laplace--Beltrami-operators by $\Delta$. In particular, we recall that $\Delta$ is a nuclear, symmetric, and negative definite operator with a basis of eigenvectors, among them the spherical functions $\{\phi_\pi: \pi \in \hat{G}_K\}$. While there are more eigenfunctions than $\phi_\pi$, the spectrum of $\Delta$ consists only of the eigenvalues of the spherical functions and constitutes the Casimir Spectrum $\textup{Sp}_C(G) := \{\kappa_\pi: \Delta\phi_\pi = -\kappa_\pi\phi_\pi, \:\pi \in \hat{G}_K\}$. We also note that the multiplicity of the eigenspace of $\kappa_\pi$ is $d_\pi$. The fractional Laplace--Beltrami-operator $\Delta^s$ can therefore be defined on $L^2_K(M)$ as
$$
\Delta^sf := \sum_{\pi \in \hat{G}_K}d_\pi (-\kappa_\pi)^s\langle f,\phi_\pi\rangle\phi_\pi
$$
for $s > 0$. The Sobolev space $H^s(M)$ is defined as the completion of $C^\infty(M)$ under the norm
$$
\|f\|_{H^s(M)}^2 = \|f\|^2+\|\Delta^{s/2}f\|^2.
$$
The introduction of $H^s(M)$ allows for a natural choice of absorbing sequence, namely to let $F_m$ be the set
$$
F_m := \{\pi \in \hat{G}_K :\kappa_\pi \leq T_m\}
$$
for a smoothing sequence $(T_m)$, i.e.\ a non-decreasing sequence in $\real$ for which $T_m \to \infty$ as $m \to \infty$. This is a useful definition since the growth of the quantity
$
\sum_{\pi \in F_m} d_\pi
$
is well-known. Indeed, let $(e_k)_{k = 1}^\infty$ be an ON-basis in $L^2(M)$ of eigenvectors to $\Delta$ with corresponding eigenvalues $(\lambda_k)_{k =1 }^{\infty}$, then the classical Weyl-law \cite{Helgason} states that $$
|\{e_k: \lambda_k \leq T\}| \sim T^{\frac{d}{2}}.
$$
However, each eigenvalue $\lambda_k$ is equal to $\kappa_\pi$ for some $\pi \in \hat{G}_K$ and appears with multiplicity $d_\pi$. This means that
$$
\sum_{\pi \in F_m} d_{\pi} = |\{e_k: \lambda_k\leq T_m\}| \sim T_m^{d/2}.
$$

\begin{theorem}
\label{theorem:densityEstimation}
    Suppose that $f_X \in H^s(M)$ for some $s > 0$. If the absorbing sequence
    $$
    F_m :=\{\pi \in \hat{G}_K: \kappa_\pi \leq T_m\}
    $$
    is applied to the estimator in \eqref{eq:mainEstimator} for a smoothing sequence $T_m$ of order $m^{2/(2s+d)}$, then the mean squared error $\ME\|f_X^{(m)}-f_X\|^2$ is of order $m^{-2s/(2s+d)}$.
\end{theorem}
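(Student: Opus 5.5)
The plan is to use the bias--variance decomposition $\|f_X^{(m)}-f_X\|^2 = V_m + B_m$ from the start of this section, take expectations, and bound the two terms separately. Then $T_m$ is chosen to balance them.

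For the variance term, the uniform bound of Proposition \ref{prop:L1Estimator} (or Proposition \ref{prop:L1EstimatorInvarianceRemoved} without inverse invariance) gives a constant $C$, independent of $\pi\in\hat{G}_K$, with $\ME|c_m(\pi)-\langle f_X,\phi_\pi\rangle|^2 \le C m^{-1}$. The exponentially small terms $\log(m)^2e^{-cm}$ and $e^{-cm}$ are absorbed into $m^{-1}$ for large $m$. By linearity of expectation,
$$
\ME[V_m] \le \frac{C}{m}\sum_{\pi\in F_m} d_\pi \lesssim \frac{T_m^{d/2}}{m},
$$
where the last step is the Weyl-law count $\sum_{\pi\in F_m}d_\pi\sim T_m^{d/2}$ recalled before the statement. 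Uniformity in $\pi$ is essential here: it is what allows the sum over $F_m$ to be taken without extra growth. It rests on the uniform positivity $\inf_\pi \nu_t(\phi_\pi)>0$ of Proposition \ref{prop:LevyKhinchinWellPosed}.

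The bias term $B_m$ is deterministic. For $\pi\notin F_m$ we have $\kappa_\pi > T_m$, so $1 < (\kappa_\pi/T_m)^s$. Hence
$$
B_m = \sum_{\pi\notin F_m} d_\pi |\langle f_X,\phi_\pi\rangle|^2 \le T_m^{-s}\sum_{\pi\notin F_m} d_\pi \kappa_\pi^s|\langle f_X,\phi_\pi\rangle|^2 \le T_m^{-s}\|\Delta^{s/2}f_X\|^2 \le T_m^{-s}\|f_X\|_{H^s(M)}^2 .
$$
This uses that, by the spectral definition of $\Delta^{s/2}$ on $L^2_K(M)$ and orthonormality of $\{d_\pi^{1/2}\phi_\pi\}$, one has $\|\Delta^{s/2}f\|^2=\sum_\pi d_\pi\kappa_\pi^s|\langle f,\phi_\pi\rangle|^2$. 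Since $f_X\in L^2_K(M)$ is $K$-invariant, its expansion involves only the spherical functions.

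Combining the two bounds gives $\ME\|f_X^{(m)}-f_X\|^2 \lesssim T_m^{d/2}m^{-1} + T_m^{-s}$. Setting $T_m\sim m^{2/(2s+d)}$ makes both terms of order $m^{-2s/(2s+d)}$: the first is $m^{d/(2s+d)-1}=m^{-2s/(2s+d)}$ and the second is $m^{-2s/(2s+d)}$. This proves the theorem.

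The main step to be careful about is the variance bound. It needs a constant in Proposition \ref{prop:L1Estimator} that is genuinely uniform over $\hat{G}_K$, including the Lipschitz constant $2/(t\Lambda\nu(\phi))$ and the Hoeffding exponents. It also needs the Weyl-law asymptotic to transfer to the count $\sum_{\pi\in F_m}d_\pi$ for the spherical spectrum with these multiplicities; this I take as given from the discussion preceding the theorem. The statement ``is of order'' is interpreted as an upper bound $\lesssim$, since only an upper bound follows from this argument.
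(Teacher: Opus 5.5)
Your proposal is correct and follows the paper's proof essentially step for step. Both arguments bound $\ME[V_m]\lesssim m^{-1}T_m^{d/2}$ using the uniform $Cm^{-1}$ coefficient bound from Proposition~\ref{prop:L1Estimator} (or Proposition~\ref{prop:L1EstimatorInvarianceRemoved}) together with the Weyl count, bound the bias by $T_m^{-s}\|f_X\|_{H^s(M)}^2$, and then balance the two with $T_m\sim m^{2/(2s+d)}$. Like the paper, you establish only the upper bound $\lesssim m^{-2s/(2s+d)}$, which is the right reading of ``of order'' here.
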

\begin{proof}
    With Proposition \ref{prop:L1Estimator} or \ref{prop:L1EstimatorInvarianceRemoved}, we can find a constant $C > 0$ for which $\ME|c_m(\pi)-\langle f_X,\phi_\pi\rangle|^2 \leq Cm^{-1}$ for all $\pi \in \hat{G}_K$ and all $m \geq 1$. It is therefore clear that
    $$
    \ME[V_{m}] = \sum_{\pi \in F_m}d_\pi\ME|c_m(\pi)-\langle f_X,\phi_\pi\rangle|^2 \leq Cm^{-1}\sum_{\pi \in F_m}d_\pi \lesssim m^{-1}T_m^{d/2}.
    $$
    The property that $\kappa_\pi \leq T_m$ in $F_m$ controls the contribution of the bias term $B_m$ since
    $$
    B_m = \sum_{\pi \notin F_m}d_\pi|\langle f_X,\phi_\pi\rangle|^2 \leq T_m^{-s}\sum_{\pi \notin F_m}d_\pi\kappa_\pi^s |\langle f_X,\phi_\pi\rangle|^2 \leq T_m^{-s}\|\Delta^{s/2}f_X\|^2 \leq T_m^{-s}\|f_X\|_{H^s(M)}^2.
    $$
    Proceeding \textit{ad hoc}, the bias and variance therefore contribute to the total error at comparable rates if $m^{-1}T_m^{d/2} \sim T_m^{-s}$, or equivalently if 
    $$
    T_m \sim m^{2/(2s+d)}.
    $$
    In that case, we see that
    \begin{align*}
    \ME\|f_X^{(m)}-f_X\|^2 &\lesssim  Cm^{-1}T_m^{d/2} + T_m^{-s}\|f_X\|_{H^s(M)}^2 \sim m^{-2s/(2s+d)}
    \end{align*}
    as desired.
\end{proof}
\subsection{Lower Bounds for Density Estimation}
The rate of convergence $m^{-2s/(2s+d)}$ in Theorem \ref{theorem:densityEstimation} corresponds precisely to the lower asymptotic bound of density estimators for inference problems in the Euclidean setting of $M = \real^d$ known in the standard literature \cite{Tsybakov}. Recent work in \cite{Asta1,Asta2} shows that the same lower bound holds true on symmetric space of non-compact type and it would therefore be reasonable to expect that similar bounds hold true in the analytically much simpler case when both $M$ and $G$ are compact. General lower bounds on the $n$-linear widths in compact symmetric spaces have been studied extensively in for example \cite{Geller,Pesenson}. However, the non-deterministic setting of density estimation seems, to the best knowledge of the author, either to be missing or at least to not have a standard reference in the literature. This section will therefore provide the necessary details to draw formal conclusions.

Let $H^s(M,Q)$ denote the probability densities in $L^2(M)$ of Sobolev norm less than or equal to a fixed $Q > 1$, i.e.\
$$
H^s(M,Q) := \{f \in H^s(M): \int f\,d\lambda_M = 1,\: \|f\|^2+\|\Delta^{s/2}f\|^2 < Q^2\}.
$$
Proposition \ref{prop:lowerBoundM} below provides a lower bound on density estimators of elements of $H^s(M,Q)$. 
\begin{proposition}
\label{prop:lowerBoundM}
    Let $\Theta_m$ be the set of estimators $\hat{f}$ of some $f \in H^s(M,Q)$ given $m$ i.i.d.\ observations of $f$. If $s > d/2$, then
    $$
    \inf_{\hat{f} \in \Theta_m} \sup_{f\in H^s(M,Q)} \ME\|\hat{f}-f\|^2 \geq Cm^{-2s/(2s+d)}
    $$
    for some $C >0$.
\end{proposition}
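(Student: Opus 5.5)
The plan is the standard Fano/Varshamov--Gilbert argument, with the Euclidean bump functions replaced by localized perturbations of the uniform density built on a maximal separated net in $M$. Throughout, $d$ is the dimension of $M$ and $\lambda_M$ its normalized invariant measure, so the uniform density is $f_0\equiv 1$.

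\textbf{Construction of the hypotheses.} Fix a smooth function $\psi$ on $\real^d$ supported in the unit ball, with $\int\psi = 0$ and $\|\psi\|_{L^2}=1$. For a bandwidth $h>0$ to be chosen, take a maximal $2h$-separated set $\{q_1,\dots,q_N\}$ in $M$. By compactness and volume comparison (the injectivity radius and curvature of $M$ are bounded), $N\sim h^{-d}$, and the geodesic balls $B(q_j,h)$ are pairwise disjoint. In normal coordinates $\exp_{q_j}$ around $q_j$, set
$$
\psi_j := h^{s}\,\psi\big(h^{-1}\exp_{q_j}^{-1}(\cdot)\big)\chi_{B(q_j,h)}.
$$
Since the chart is smooth with Jacobian $1+O(h^2)$, $\psi_j$ is smooth, and we get $\|\psi_j\|^2\sim h^{2s+d}$. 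Also $\int \psi_j\,d\lambda_M = O(h^{s+d+2})$; this can be corrected to exactly $0$ by subtracting a small multiple of a fixed bump, or by choosing $\psi$ radial and using that the density of $\lambda_M$ in normal coordinates is radial up to $O(r^2)$ terms.

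For $\omega\in\{0,1\}^N$ define
$$
f_\omega = 1+\gamma\sum_j \omega_j\psi_j,
$$
with a small constant $\gamma$. Then $\sup|f_\omega-1|\lesssim \gamma h^s$, so $f_\omega$ is a positive density for small $h$.

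\textbf{Sobolev membership.} This is the step I expect to be the main obstacle, since it is where the geometry of $M$ and the spectral definition of $H^s(M)$ enter. For integer $s$ I would bound $\|\Delta^{s/2}\sum_j\omega_j\psi_j\|^2$ directly. For even $s$, $\Delta^{s/2}$ is a differential operator, so the disjoint supports make the cross terms vanish. In local coordinates $\Delta$ is $h^{-2}$ times a uniformly elliptic operator applied to the rescaled bump, which gives
$$
\|\Delta^{s/2}\psi_j\|^2\lesssim h^{2s}h^{-2s}h^d=h^d,
$$
and hence $\|\Delta^{s/2}(f_\omega-1)\|^2\lesssim \gamma^2 N h^d\lesssim\gamma^2$.

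For general real $s$, I would use the equivalence of the spectral Sobolev norm with the norm defined by a finite partition of unity and Euclidean $H^s$ norms in charts, a standard fact on compact manifolds. I would then invoke the Euclidean scaling $\|g(\cdot/h)\|_{\dot H^s}^2 = h^{d-2s}\|g\|_{\dot H^s}^2$, together with near-orthogonality of disjointly supported bumps in $H^s$ (use $H^{\lceil s\rceil}$ and interpolation). Since $\Delta^{s/2}1=0$, choosing $\gamma$ small (depending on $Q>1$) gives $f_\omega\in H^s(M,Q)$. The hypothesis $s>d/2$ can be used here: through Sobolev embedding it gives uniform sup-control, hence positivity, although the explicit $h^s$ factor already suffices.

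\textbf{Fano step and choice of $h$.} By Varshamov--Gilbert there is a subset $\Omega'\subset\{0,1\}^N$ with $|\Omega'|\ge 2^{N/8}$ and pairwise Hamming distance at least $N/8$. For $\omega\neq\omega'$ in $\Omega'$, disjointness of supports gives
$$
\|f_\omega-f_{\omega'}\|^2=\gamma^2\sum_j|\omega_j-\omega'_j|\,\|\psi_j\|^2\gtrsim \gamma^2 N h^{2s+d}\sim\gamma^2h^{2s}.
$$
On the other hand, since $f_{\omega'}\ge 1/2$, the divergence between product measures satisfies
$$
\mathrm{KL}(f_\omega^{\otimes m},f_{\omega'}^{\otimes m})\le m\,\chi^2\le 2m\|f_\omega-f_{\omega'}\|^2\lesssim m\gamma^2 h^{2s}.
$$
Choose $h=m^{-1/(2s+d)}$. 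Then $m h^{2s}=h^{-d}\sim N$, so the KL bound is at most $c\gamma^2 N\le \alpha\log|\Omega'|$ for $\gamma$ small. Fano's inequality (Tsybakov, Theorem 2.5) then yields
$$
\inf_{\hat f}\sup_{\omega}\ME\|\hat f-f_\omega\|^2\gtrsim h^{2s}=m^{-2s/(2s+d)},
$$
and since each $f_\omega$ lies in $H^s(M,Q)$, the supremum over $H^s(M,Q)$ is at least as large, which is the claimed bound.
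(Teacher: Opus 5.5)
Your proof is correct, and it takes a genuinely different route from the paper.

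The paper perturbs the uniform density spectrally, $f_\alpha=1+\eps\sum_{\lambda_k\le T}\alpha_k e_k$, using Laplace--Beltrami eigenfunctions with $\eps^2\sim m^{-1}$ and $T\sim m^{2/(2s+d)}$. Orthogonality makes both the $L^2$ separation and the Sobolev bound $\|f_\alpha-1\|_{H^s}^2\lesssim\eps^2T^s|\Gamma_T|$ immediate, and the Weyl law $|\Gamma_T|\sim T^{d/2}$ supplies the number of hypotheses. The price is positivity, which is needed for the KL bound. The only sup-norm control available for a spectral perturbation is $\eps|\Gamma_T|$ (Cauchy--Schwarz plus the local Weyl law), and forcing $\eps|\Gamma_T|\to 0$ is exactly where the paper needs $s>d/2$.

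Your localized bumps at scale $h=m^{-1/(2s+d)}$ reverse this trade-off. The count $N\sim h^{-d}$ plays the role of $|\Gamma_T|$, and each bump carries $L^2$-energy $h^{2s+d}=m^{-1}$, matching $\eps^2$. Positivity, however, is free, because $\sup|f_\omega-1|\lesssim\gamma h^s$. So your argument gives the bound for every $s>0$ and uses only local geometry.

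What you pay is that Sobolev membership is no longer diagonal. That step is cleaner than your chart-based sketch suggests. For integer $k$, $\|(-\Delta)^{k/2}F\|^2=\langle(-\Delta)^kF,F\rangle$ is local, so disjoint supports decouple and give $\lesssim h^{2s-2k}$ for $F=\sum_j\omega_j\psi_j$. For fractional $s<k$, the spectral H\"older inequality $\|(-\Delta)^{s/2}F\|^2\le\|F\|^{2(1-s/k)}\|(-\Delta)^{k/2}F\|^{2s/k}$ then gives $\lesssim 1$ without any norm equivalences.

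One small fix is needed. Choosing $\psi$ radial does not make $\int\psi_j\,d\lambda_M$ vanish, because the second-order term of the volume density in normal coordinates has a nonzero radial part, proportional to the scalar curvature. Instead, use $\psi-a_h\rho$ with a fixed bump $\rho$ and $a_h=O(h^2)$ in rescaled coordinates. Alternatively, translate a single corrected bump around by isometries in $G$, so that all $\psi_j$ have identical integrals and Sobolev norms.

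Finally, note that the paper's spectral construction is the one it reuses for the $K$-invariant rank bound, by replacing eigenfunctions with spherical functions.
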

\begin{proof}
    We introduce a complete ON-basis $(e_k)_{k = 1}^\infty$ of $L^2(M)$ consisting of eigenvectors $\Delta e_k = -\lambda_ke_k$ to the Laplace--Beltrami-operator $\Delta$, and define the set $\Gamma_T := \{e_k : \lambda_k \leq T\}$ which follows the growth
    $$
    |\Gamma_T| \sim T^{\frac{d}{2}}
    $$
    by the Weyl counting law of $\Delta$.
    The proof follows the ideas that are employed in the Euclidean case, namely to construct a finite subset $H_m \subset H^s(M,Q)$ of sufficiently large size, where elements are well-separated but also have sufficiently low pairwise Kullback--Liebler-divergence. When this is done, a Fano type inequality can be applied to show that $\ME\|\hat{f}-f\|^2$ is bounded below independently of the choice of estimator. To this end, let $\eps > 0$ and consider the finite set
    $$
    \{f_\alpha = 1+\eps \sum_{e_k \in \Gamma_T}\alpha_ke_k: \alpha = (\alpha_k) \in \{-1,1\}^{\Gamma_T}\}.
    $$
    We pick a subset $H_m$ satisfying $|H_m| \geq 2^{|\Gamma_T|/8}$ wherein the set of coefficients $\alpha,\beta \in \{-1,1\}^{\Gamma_T}$ of any two distinct $f_\alpha \neq f_\beta$ have a Hamming distance
    $$
    d(\alpha,\beta) := |\{k = 1,2,3,\dots,|\Gamma_T|: \alpha_k \neq \beta_k\}|
    $$
    which is greater than $\frac{|\Gamma_T|}{8}$. Hence, distinct $f_\alpha,f_\beta \in H_m$ are well-separated as
    $$
    \|f_\alpha-f_\beta\|^2  = \eps^2\sum_{e_k\in \Gamma_T}|\alpha_k-\beta_k|^2 \geq \frac{\eps^2|\Gamma_T|}{2}
    $$
    We note that under a uniformly drawn $\alpha$ in $H_m$, we obtain a form of Fano's inequality \cite{Tsybakov} in the probability of misclassification of the best estimator
    $$
    \hat{\alpha} := \argmin_{\alpha \in H_m} \|\hat{f}-f_\alpha\|^2
    $$
    in 
    \begin{align}
    \label{eq:Fanos}
    \MP(\alpha \neq \hat{\alpha}) \geq 1-\frac{\max_{\alpha \neq \beta}\textup{KL}(f_\alpha^{(\otimes m)},f_\beta^{(\otimes m)})+\log(2)}{\log|H_m|}
    \end{align}
    where $f_\alpha^{(\otimes m)}$ is the $m$-fold tensor $f_\alpha^{(\otimes m)} := f_\alpha\otimes f_\alpha\otimes \ldots \otimes f_\alpha$ corresponding to $m$ i.i.d.\ samples of $f_\alpha$, and $\textup{KL}$ denotes the Kullback--Liebler divergence which in this case takes the form
    $$
    \textup{KL}(f_\alpha^{(\otimes m)},f_\beta^{(\otimes m)}) = \int_{M^m} f_\alpha^{(\otimes m)} \log (\frac{f_\alpha^{(\otimes m)}}{f_\beta^{(\otimes m)}})\,d\lambda_M^{(\otimes m)}.
    $$
    It is a matter of straightforward computations to see that $\textup{KL}(f_\alpha^{(\otimes m)},f_\beta^{(\otimes m)}) = m\textup{KL}(f_\alpha,f_\beta)$. By Lemma \ref{lemma:KLDivNonKInvariant} below, 
    $$
    \max_{\alpha\neq \beta}\textup{KL}(f_\alpha,f_\beta) \leq c_1 \eps^2|\Gamma_T|
    $$
    for a constant $c_1 > 0$ if $\eps|\Gamma_T|$ is sufficiently small. We shall soon see that this condition on $\eps |\Gamma_T|$ poses no issues. This means that Inequality \eqref{eq:Fanos} is further bounded as
    $$
        \MP(\alpha \neq \hat{\alpha}) \geq 1-\frac{c_1m\eps^2|\Gamma_T|+\log(2)}{\log|H_m|} \geq 1-c_2\frac{c_1m\eps^2|\Gamma_T|+\log(2)}{|\Gamma_T|}.
    $$
    for some $c_2 > 0$. The last inequality follows from the fact $|H_m| \geq 2^{|\Gamma_T|/8}$ implies that $\log(|H_m|) \geq c_2|\Gamma_T|$. If $\eps^2m$ stays below some sufficiently small positive number as $m \to \infty$, then $\MP(\alpha \neq \hat{\alpha}) \geq c_3 > 0$ for some constant $c_3$. In particular, choosing
    \begin{align}
        \label{eq:probabilityconstraint}
        m\eps^2 \sim 1
    \end{align}
    implies that the probability $\MP(\alpha \neq \hat{\alpha})$ is positive uniformly for elements $f_\alpha \in H_m$.
    
    The proof is almost done. Note that for any $\hat{f} \in H^s(M,Q)$ and any $\alpha$ for which $\alpha \neq \hat{\alpha}$ it is true that
    $$
    \|f_\alpha-f_{\hat{\alpha}}\|_2 \leq \|\hat{f}-f_\alpha\|_2+\|\hat{f}-f_{\hat{\alpha}}\|_2 \leq 2\|\hat{f}-f_{\alpha}\|_2,
    $$
    so 
    $
    \|\hat{f}-f_{{\alpha}}\|^2 \geq \frac{1}{4}\|f_\alpha-f_{\hat{\alpha}}\|^2.
    $
    Because of this
    \begin{align}
    \label{eq:expectationLowerBound}
    \ME\|\hat{f}-f_\alpha\|^2 \geq \ME[\chi_{\{\hat{\alpha} \neq \alpha\}}\cdot\frac{1}{4}\|f_\alpha-f_{\hat{\alpha}}\|^2] \geq \frac{1}{8}\eps^2|\Gamma_T|\MP(\hat{\alpha} \neq \alpha).
    \end{align}
    We know that $\MP(\hat{\alpha} \neq \alpha)$ is positive uniformly in $\alpha$ if $\eps^2 \sim m^{-1}$ by Expression \eqref{eq:probabilityconstraint}, but $\eps$ must also be controlled to ensure that $H_m$ is a subset of $H^s(M,Q)$. Indeed, we note that a sufficient condition for this to be the case is that each element of $H_m$ lies in a ball of small radius about the identity $1 \in H^s(M, Q)$. Moreover,
    $$
    \|f_\alpha- 1\|_{H^s}^2 = \eps^2|\Gamma_T|+\sum_{\pi \in \Gamma_T}\eps^2 \kappa_\pi^s \leq \eps^2|\Gamma_T|+ \eps^2T^s|\Gamma_T| \lesssim \eps^2T^s|\Gamma_T| \sim \eps^2T^{s+d/2},
    $$
    where in the last equality we have utilized the Weyl counting law $|\Gamma_T| \sim T^{d/2}$. This means that under the additional constraint
    \begin{align}
        \label{eq:sobolevconstraint}
        \eps^2T^{s+d/2} \sim 1
    \end{align}
    then $H_m$ is a subset of $H^s(M,Q)$. Combining \eqref{eq:probabilityconstraint} and \eqref{eq:sobolevconstraint}, one sees that if $T$ satisfies
    $$
    T \sim m^{1/(s+d/2)}
    $$
    then $\eps^2 \sim m^{-1}$ and $\eps^2T^{s+d/2} \sim 1$ are possible simultaneously. Furthermore, in this case
    $$
    \eps |\Gamma_T| \sim m^{d/(2s+d)}m^{-1/2} = m^{(d-2s)/(4s+2d)}
    $$
    which tends to zero since $s > \frac{d}{2}$ and cements the fact the assumption $\eps|\Gamma_T| \sim 1$ of Lemma \ref{lemma:KLDivNonKInvariant} is valid. The lower bound in \eqref{eq:expectationLowerBound} therefore further reduces to
    $$
    \ME\|\hat{f}-f_\alpha\|^2 \geq c\eps^2|\Gamma_T|
    $$
    for a constant $c > 0$ independent of $\alpha$. Moreover, $$
    \eps^2|\Gamma_T| \sim m^{-1}m^{d/(2s+d)} = m^{-2s/(2s+d)}
    $$
    follows the desired rate.
    
     Combining the results, we see that
     $$
     \sup_{f\in H^s(M,Q)}\ME\|\hat{f}-f\|^2 \geq \sup_{f_\alpha \in H_m} \ME\|\hat{f}-f_\alpha\|^2 \geq cm^{-2s/(2s+d)}.
     $$
     The bound is evidently independent of the choice of estimator $\hat{f}$ and the result follows.
\end{proof}

%%%%%%%%%% DO NOT TOUCH ABOVE HERE %%%%%%%%%%%%%
Note that Proposition \ref{prop:lowerBoundM} does not show that the estimator in \eqref{eq:mainEstimator} is asymptotically optimal. Indeed, decompounding as framed in Theorem \ref{theorem:densityEstimation} is not a problem of density estimation in $H^s(M)$, but rather in $H^s_K(M,Q) := H^s(M,Q) \cap L_K^2(M)$. As Proposition \ref{prop:lowerBoundMKinvariant} below shows, we obtain a different lower bound for density estimation which depends on the rank of the symmetric space $M$.

\subsection{Lower Bounds for $K$-invariant Density Estimation} To consider $K$-invariant density estimation we must firstly return to considerations of the geometrical structure of $M = G/K$. The Lie algebra $\gg$ of $G$ can be written in terms of its Cartan decomposition $\gg = \hh \oplus \pp$, where $\hh$ is the Lie algebra of $K \subset G$ and where $\pp$ is a Lie subalgebra of $G$ which is isomorphic to the tangent space $T_{p_0}M$ of $M$ at $p_0$. The \textbf{rank} $r$ of $M$ is the dimension of the maximal abelian subalgebra $\mathfrak{a} \subset \pp$. In practical terms, $1 \leq r \leq d$ and $r$ should be interpreted as the maximal dimension of a totally geodesic flat submanifold of $M$. For example, $S^d$ has rank $r = 1$ for all $d$, since we can at most embed a great circle (of dimension $1$) as a totally geodesic flat submanifold around any point. Likewise, $\real^d$ is trivially of rank $d$. We reiterate once again that a formal treatment of these classical facts is found in \cite{Helgason}.

For lower bounds on density estimation in $L_K^2(M)$, we will return to considering sets of the form
$$
F_T :=\{\pi \in \hat{G}_K:\kappa_\pi \leq T\},
$$
and it is because of this that the rank $r$ has an influence on density estimation. Indeed, in $F_T$ there is only a single element $\pi$ for each Casimir element, as opposed to the $d_\pi$ basis elements that are included for each Casimir element in the set $\Gamma_T$ constructed for Proposition \ref{prop:lowerBoundM}. Hence, the Weyl counting law can not be applied directly as it could for $\Gamma_T$. Instead, the growth rate of this set will be
$$
|F_T| \sim T^{r/2}.
$$
Let us give a sketch of the main steps needed to prove why this is true: By the classical statement of the Cartan--Helgason-theorem, see \cite[Chapter 5]{Helgason3}, the spherical representations $\pi \in \hat{G}_K$ can be parametrized by a subset of the dominant weights of $G$ that restrict to linear functionals $\gamma_\pi: \mathfrak{a} \to \real$ under certain invariance conditions, where again $\mathfrak{a} \subset \pp$ is a maximal Abelian subspace. We aptly say that the resulting restrictions $W := \{\gamma_\pi: \mathfrak{a} \to \real\}$ are the \textbf{restricted weights} of $M$. It is a classical result, see e.g.\ \cite[Corollary 2.5.1]{Applebaum}, that a dominant weight $\gamma$ corresponding to $\pi \in \hat{G}$ satisfies $|\gamma|^2 \sim \kappa_\pi$. In other words, counting elements in $F_T$ amounts to counting the number of restricted weights that are contained in a ball of radius $T^{1/2}$ in the algebraic dual $\mathfrak{a}^*\cong \real^r$ of $\mathfrak{a}$. Due also to the Cartan--Helgason-theorem, the set of restricted weights $W = L \cap \mathcal{C}$ is given as the intersection by an $r$-dimensional lattice $L \subset \mathfrak{a}^*$ and a closed cone\footnote{In actuality, $\mathcal{C}$ is a positive Weyl-Chamber of the root system in $\mathfrak{a}^*$, but this would require a paragraph on its own to define and we thus leave it as generically defined as possible.} $\mathcal{C}$ with non-empty interior and full rank $r$. In other words, the number of restricted weights contained in a ball of radius $T^{1/2}$ has the same leading order growth as lattice point counting in $\real^r$ since the intersection by $\mathcal{C}$ is immaterial for the leading order growth of $W$. We conclude that
$$
|F_T| \sim |\{\gamma \in W: |\gamma|\leq T^{1/2}\}| \sim |\{\gamma \in L \subset  \mathfrak{a}^*: |\gamma| \leq T^{1/2}\}|  \sim T^{r/2}.
$$
With this, we are ready to state and prove Proposition \ref{prop:lowerBoundMKinvariant}.
%%%%%%%%%%%%%%%%%%%%%%% DO NOT TOUCH BEYOND HERE %%%%%%%%%%%%%%%%%%
\begin{proposition}
\label{prop:lowerBoundMKinvariant}
    Let $\Theta^K_m$ be the set of estimators $\hat{f}_m$ of some $f \in H_K^s(M,Q)$ given $m$ i.i.d.\ observations of $f$. If $s > d/2$, then
    $$
    \inf_{\hat{f} \in \Theta_m^K} \sup_{f\in H^s(M,Q)} \ME\|\hat{f}-f\|^2 \geq Cm^{-2s/(2s+r)}
    $$
    for some $C >0$, where $r$ is the rank of $M$.
\end{proposition}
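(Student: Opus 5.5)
The plan is to repeat the Fano-type argument of Proposition \ref{prop:lowerBoundM}, but with the hypercube of perturbations built only from the $K$-invariant eigenfunctions. Every candidate density then lies in $H^s_K(M,Q)$, and the supremum over $H^s(M,Q)$ in the statement is bounded below by the supremum over these candidates. For $T>0$, take the set $F_T=\{\pi\in\hat G_K:\kappa_\pi\le T\}$ and the normalized spherical functions $e_\pi:=d_\pi^{1/2}\phi_\pi$, which are orthonormal in $L^2_K(M)$. Exclude the trivial representation, so that every $e_\pi$ integrates to zero. Define
$$
f_\alpha = 1+\eps\sum_{\pi\in F_T}\alpha_\pi \Re(e_\pi),\qquad \alpha\in\{-1,1\}^{F_T}.
$$
Taking real parts keeps $f_\alpha$ real. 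If $\phi_\pi$ is not real, I pair $\pi$ with its dual $\bar\pi$ and use the real and imaginary parts; this changes $|F_T|$ only by a constant factor.

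By the Varshamov--Gilbert bound, I extract a subset $H_m$ with $|H_m|\ge 2^{|F_T|/8}$ and pairwise Hamming distance at least $|F_T|/8$. Orthonormality then gives $\|f_\alpha-f_\beta\|^2\gtrsim \eps^2|F_T|$ for distinct elements. The counting law $|F_T|\sim T^{r/2}$ established just above replaces the Weyl law $|\Gamma_T|\sim T^{d/2}$ throughout.

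For the Kullback--Leibler bound, I use $\mathrm{KL}(f_\alpha,f_\beta)\le\chi^2(f_\alpha,f_\beta)\le \|f_\alpha-f_\beta\|^2/\inf f_\beta$. This requires $f_\beta\ge 1/2$, i.e.\ $\eps\sum_{\pi\in F_T}\|e_\pi\|_\infty\le 1/2$. Since $|\phi_\pi|\le 1$, we have $\|e_\pi\|_\infty\le d_\pi^{1/2}$, and this is where I expect the main obstacle. In Lemma \ref{lemma:KLDivNonKInvariant} the corresponding condition was $\eps|\Gamma_T|\to0$, but here the dimensions $d_\pi$ grow polynomially in $\kappa_\pi$. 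I will first try the crude estimate $\sum_{\pi\in F_T}d_\pi^{1/2}\le |F_T|^{1/2}\big(\sum_{\pi\in F_T} d_\pi\big)^{1/2}\lesssim T^{r/4}T^{d/4}$.

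With $\eps\sim m^{-1/2}$ and $T\sim m^{2/(2s+r)}$ (derived below), this gives $\eps T^{(r+d)/4}\sim m^{(r+d)/(2(2s+r))-1/2}$. The exponent is negative exactly when $d<2s$, which is the hypothesis $s>d/2$. So the hypothesis appears to be precisely what makes the crude estimate suffice, and the perturbations stay uniformly positive. The lemma then yields $\mathrm{KL}(f_\alpha,f_\beta)\lesssim \eps^2|F_T|$, hence $\mathrm{KL}(f_\alpha^{\otimes m},f_\beta^{\otimes m})\lesssim m\eps^2|F_T|$, and Fano's inequality \eqref{eq:Fanos} gives $\MP(\hat\alpha\ne\alpha)\ge c_3>0$ once $m\eps^2$ is a small constant.

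Membership in $H^s_K(M,Q)$ follows as before: $\|f_\alpha-1\|_{H^s}^2\lesssim \eps^2T^s|F_T|\sim \eps^2T^{s+r/2}$, which we require to be of order $1$ with a small constant. Balancing $m\eps^2\sim1$ against $\eps^2T^{s+r/2}\sim1$ gives $T\sim m^{2/(2s+r)}$. The triangle-inequality argument of \eqref{eq:expectationLowerBound} then gives
$$
\sup_{f}\ME\|\hat f-f\|^2\gtrsim \eps^2|F_T|\sim m^{-1}m^{r/(2s+r)}=m^{-2s/(2s+r)},
$$
uniformly over estimators. This is the claim, with $C$ depending only on $s$, $Q$, $r$ and $d$.
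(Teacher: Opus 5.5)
Your proposal is correct and takes essentially the same route as the paper. Both arguments use a Varshamov--Gilbert hypercube over the spherical functions with $\kappa_\pi\le T$, control the sup norm via Cauchy--Schwarz so that $\eps T^{(r+d)/4}\to0$ (which is exactly where $s>d/2$ enters), apply Fano with $m\eps^2\sim1$, and choose $T\sim m^{2/(2s+r)}$. Dropping the trivial representation, using real and imaginary parts via dual pairs, and bounding KL by $\chi^2$ simply tidy up points the paper glosses over: its $f_\alpha$ as written need not integrate to one or be real-valued.
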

\begin{proof}
    Let $F_T := \{\phi_\pi: \kappa_\pi \leq T\}$ and $\eps > 0$. Consider the set consisting of functions $f_\alpha = 1+\eps\sum_{\pi \in F_T}\alpha_\pi \sqrt{d_\pi}\phi_\pi$ for $\alpha = (\alpha_\pi) \in \{-1,1\}^{F_T}$ and construct a subset $H_m$ wherein distinct indices $\alpha$ and $\beta$ of $f_\alpha,f_\beta \in H_m$ have Hamming distance greater than $|F_T|/8$ while $|H_m| \geq 2^{|F_T|/8}$. Then, for any two $f_\alpha \neq f_\beta$ in $H_m$ it follows that
    $$
    \|f_\alpha-f_\beta\|^2 = \eps^2 \sum_{\pi \in \Gamma_T}|\alpha_\pi-\beta_\pi|^2 \geq \frac{\eps^2|F_T|}{2}.
    $$
    By Lemma \ref{lemma:KLDivKInvariant}, the Kullback--Leibler divergences satisfy 
    $$
    \textup{KL}(f_\alpha,f_\beta) \lesssim \eps^2|F_T|
    $$
    under the assumption that $\eps T^{(r+d)/4}$ remains sufficiently small. We obtain, identically to the proof of Proposition \ref{prop:lowerBoundM} that 
    $$
    \ME\|\hat{f}-f_\alpha\|^2 \geq \frac{1}{8}\eps^2|F_T|\MP(\hat{\alpha} \neq \alpha)
    $$
    for $\hat{\alpha} := \argmin_{\alpha \in H_m} \|\hat{f}-f_\alpha\|^2$, and 
    $$
    \MP(\hat{\alpha} \neq \alpha) \geq 1-c_2\frac{c_1m\eps^2|F_T|+\log(2)}{|F_T|}.
    $$
    for constants $c_1,c_2 > 0$, by identical argumentation to that of the proof of Proposition \ref{prop:lowerBoundM}. Hence, the condition that $m\eps^2 \sim 1$ for a small bounding constant is sufficient to conclude that
    $$
    \ME\|\hat{f}-f_\alpha\|^2 \geq C\eps^2|F_T|
    $$
    for a constant $C$ independent of $\alpha$. We see furthermore that
    $$
    \|f_\alpha-1\|_{H^s(M)}^2 = \eps^2|F_T|+\eps^2\sum_{\pi \in F_T}\kappa_\pi^s \leq \eps^2|F_T|+\eps^2T^s|F_T|,
    $$
    meaning that $H_m \subset H^s_K(M,Q)$ if $\eps^2T^{s+r/2}$ remains bounded. The conditions $\eps^2m \sim 1$ and $\eps^2T^{s+r/2} \sim 1$ can be made possible simultaneously by choosing $\eps^2 \sim m^{-1}$ and $T \sim m^{1/(s+r/2)}$. Such a choice does not violate the condition that $\eps T^{(r+d)/4}$ remains bounded in Lemma \ref{lemma:KLDivKInvariant}. Indeed, for these choices of $\eps$ and $T$ we obtain that
    $$
    \eps T^{(r+d)/4} \sim m^{-1/2}m^{(r+d)/(4s+2r)} = m^{(r+d-(2s+r))/(4s+2r)} = m^{(d-2s)/(4s+2r)},
    $$
    which tends to zero since $s > \frac{d}{2}$. Consequently,
    $$
    \eps^2|F_T| \sim m^{-1}T^{r/2}\sim m^{-1}m^{r/(2s+r)} = m^{-2s/(2s+r)} 
    $$
    and in particular
    $$
    \sup_{f \in H^s_K(M,Q)}\ME\|\hat{f}-f\|^2 \geq \sup_{f_\alpha \in H_m}\ME\|\hat{f}-f_\alpha\|^2 \geq Cm^{-2s/(2s+r)}
    $$
    for a constant $C$ which does not depend on the estimator $\hat{f}$. The result follows.
\end{proof}
When comparing $m^{-2s/(2s+r)}$ and $m^{-2s/(2s+d)}$ one should firstly note the obvious fact that the relative difference between the rates is greater for low values of $r$. However, we also see that as $s$ increases, the difference between the rates is less significant. In other words, the smoother we assume that the densities are, the more negligible is the difference between the lower bounds in $H_K^s(M,Q)$ and those in $H^s(M,Q)$.

Additionally, the fact that the decompounding estimator in Theorem \ref{theorem:densityEstimation} does not meet the rate provided by Proposition \ref{prop:lowerBoundMKinvariant} is not that surprising when considering the fact that it does not utilize any information of the maximal abelian subspace $\mathfrak{a} \subset\pp$. However, we should also note that strictly speaking, Proposition \ref{prop:lowerBoundMKinvariant} does not exclude the possibility that decompounding is an estimation problem that can not reach the rates of the lower bound. Indeed, Proposition \ref{prop:lowerBoundMKinvariant} provides a lower bound, but it may not necessarily be the greatest lower bound. 
\subsection{Proofs of Lemmas}
Before proving Lemmas \ref{lemma:KLDivNonKInvariant} and \ref{lemma:KLDivKInvariant}, we recall that the Weyl--Law can be stated in terms of the eigenfunctions $(e_k)_{k = 1}^\infty$ alternatively, in that \cite{Helgason}
$$
\sum_{\lambda_k \leq T}|e_k(p)|^2 \lesssim T^{d/2}
$$
uniformly in $p \in M$.
\begin{lemma}
\label{lemma:KLDivNonKInvariant}
    Let $H_m$ and $\Gamma_T$ be defined as in the proof of Proposition \ref{prop:lowerBoundM}.  If $\eps T^{d/2} \sim 1$ for a sufficiently small bounding constant, then $f_\alpha \geq \frac{1}{2}$ for all $f_\alpha \in H_m$. In that case,
    $$
    \textup{KL}(f_\alpha,f_\beta) \lesssim \eps^2|\Gamma_T|.
    $$
\end{lemma}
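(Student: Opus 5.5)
The plan has two steps: a pointwise lower bound on $f_\alpha$, then the standard comparison of the Kullback--Leibler divergence with the $\chi^2$-divergence.

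\textbf{Pointwise bound.} For $f_\alpha = 1+\eps\sum_{e_k\in\Gamma_T}\alpha_k e_k$, the Cauchy--Schwarz inequality gives, at each $p\in M$,
$$
\Big|\eps\sum_{e_k\in\Gamma_T}\alpha_k e_k(p)\Big| \leq \eps\,|\Gamma_T|^{1/2}\Big(\sum_{\lambda_k\leq T}|e_k(p)|^2\Big)^{1/2}.
$$
The pointwise Weyl law recalled just before the lemma bounds the last factor by a constant times $T^{d/4}$, uniformly in $p$. Together with $|\Gamma_T|\sim T^{d/2}$, the perturbation is therefore uniformly $\lesssim \eps T^{d/2}$. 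If $\eps T^{d/2}$ is below a sufficiently small constant, the perturbation is at most $\frac{1}{2}$ in absolute value, so $f_\alpha\geq \frac12$ everywhere. This is also the condition invoked in Proposition \ref{prop:lowerBoundM}, where $\eps|\Gamma_T|\sim\eps T^{d/2}$. As a side remark, $f_\alpha$ integrates to $1$ because every nonconstant eigenfunction is orthogonal to $1$. One should also exclude the constant eigenfunction from $\Gamma_T$, or simply absorb it, since it is harmless.

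\textbf{KL bound.} Next, use the elementary inequality $\log x\leq x-1$ to bound the divergence by the $\chi^2$-divergence:
$$
\textup{KL}(f_\alpha,f_\beta)=\int f_\alpha\log\frac{f_\alpha}{f_\beta}\,d\lambda_M \leq \int \frac{(f_\alpha-f_\beta)^2}{f_\beta}\,d\lambda_M.
$$
Here I use that $\int f_\alpha\,d\lambda_M=\int f_\beta\,d\lambda_M=1$. Since $f_\beta\geq\frac12$ by the first step, the right-hand side is at most $2\|f_\alpha-f_\beta\|^2$. By orthonormality this equals $2\eps^2\sum_k|\alpha_k-\beta_k|^2\leq 8\eps^2|\Gamma_T|$, which gives $\textup{KL}(f_\alpha,f_\beta)\lesssim\eps^2|\Gamma_T|$.

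\textbf{Main obstacle.} The only delicate step is the uniform pointwise bound in the first step. It rests entirely on the uniform pointwise Weyl law $\sum_{\lambda_k\leq T}|e_k(p)|^2\lesssim T^{d/2}$, which I take as given from the preceding remark. On a homogeneous space $M$ this holds with equality up to a constant: by $G$-invariance the sum is independent of $p$, so integrating it gives exactly $|\Gamma_T|$.
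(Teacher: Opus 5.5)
Your proof is correct. Its first step is the paper's, namely Cauchy--Schwarz plus the pointwise Weyl law giving $|f_\alpha-1|\lesssim\eps T^{d/2}$. The KL step takes a different and more elementary route.

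The paper writes $\log(1+x)=x+B(x)x^2$ with $B$ bounded on the relevant range and substitutes $x=(f_\alpha-f_\beta)/f_\beta$. It then splits $f_\alpha=(f_\alpha-f_\beta)+f_\beta$ and cancels the linear term via $\int_M(f_\alpha-f_\beta)\,d\lambda_M=0$. It still has to control a remainder with quadratic and cubic terms, using boundedness of $B$ and of $|f_\alpha-f_\beta|$. Your inequality $\log x\le x-1$ bounds $\textup{KL}(f_\alpha,f_\beta)$ by the $\chi^2$-divergence in one line. It uses only $f_\alpha\ge 0$, $f_\beta\ge\frac12$ and the normalization, and gives the explicit constant $8$ with no remainder bookkeeping.

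What the expansion buys is a possible two-sided comparison: for $f_\alpha/f_\beta$ near $1$, KL is about half the $\chi^2$-divergence, so the $L^2$ bound is sharp up to constants. Fano's inequality, however, needs only the upper bound.

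Your side remarks also improve on the paper:
\begin{itemize}
\item With $\Gamma_T$ as defined there, the constant eigenfunction ($\lambda=0$) is included, so $f_\alpha$ would not integrate to $1$. Removing it is needed both for your normalization step and for the paper's cancellation of the linear term.
\item Your homogeneity argument shows $\sum_{\lambda_k\le T}|e_k(p)|^2=|\Gamma_T|$ for every $p$. The first step therefore gives exactly $|f_\alpha-1|\le\eps|\Gamma_T|$.
\end{itemize}

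Both you and the paper tacitly take the $e_k$ real-valued. This is possible since $\Delta$ commutes with complex conjugation, and it is needed for $f_\alpha\ge\frac12$ to make sense.
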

\begin{proof}
    For $f_\alpha$ to be greater than $\frac{1}{2}$, it is sufficient that $|f_\alpha-1| \leq \frac{1}{2}$. By applying Cauchy--Schwarz we obtain
    $$
    |f_\alpha-1| \leq \eps\sum_{\lambda_k \leq T_m} |e_k| \leq \eps \sqrt{|\Gamma_T|}\bigg(\sum_{\lambda_K\leq T_m}|e_k|^2\bigg)^{1/2}\sim \eps |\Gamma_T| \sim \eps T^{d/2}.
    $$
    This proves the first claim. For the second claim, we let $B$ be a bounded real function for which $\log(1+x) = x+B(x)x^2$. Note in particular that $C:= \sup_{p \in M}B(\frac{|f_\alpha-f_\beta|^2}{f_b^2}) < +\infty$ if $\eps T^{d/2}$ is chosen sufficiently small. Then
    \begin{align*}
        \textup{KL}(f_\alpha,f_\beta) &= \int_M f_\alpha \log\Big(1+\frac{f_\alpha-f_\beta}{f_\beta}\Big)\,d\lambda_M\\
        & = \int_Mf_\alpha(\frac{f_\alpha-f_\beta}{f_\beta}+B(\xi)\Big(\frac{f_\alpha-f_\beta}{f_\beta}\Big)^2)\,d\lambda_M
    \end{align*}
    for some $\xi$ close to $\frac{f_\alpha-f_\beta}{f_\beta}$. By rewriting $f_\alpha = f_\alpha-f_\beta+f_\beta$ this further expands to
    \begin{align*}
        &\int_M (f_\alpha-f_\beta)(\frac{f_\alpha-f_\beta}{f_\beta}+B(\xi)\Big(\frac{f_\alpha-f_\beta}{f_\beta}\Big)^2)\,d\lambda_M + \int_Mf_\alpha-f_\beta + B(\xi)\Big(\frac{(f_\alpha-f_\beta)^2}{f_\beta}\Big)\,d\lambda_M\\
        &  = \int_M \frac{1}{f_\beta}(f_\alpha-f_\beta)^2\,d\lambda_M+\int_M \Big(\frac{(f_\alpha-f_\beta)^2}{f_\beta}+\frac{(f_\alpha-f_\beta)^3}{f_\beta^2})\Big)B(\xi)\,d\lambda_M.
    \end{align*}
    We apply a modulus sign to the entire expression, which gives that
    $$
    \textup{KL}(f_\alpha,f_\beta) = |\textup{KL}(f_\alpha,f_\beta)| \leq \frac{1}{2}\|f_\alpha-f_\beta\|^2 +C\int_M \frac{1}{2}|f_\alpha-f_\beta|^2+\frac{1}{4}|f_\alpha-f_\beta|^3\,d\lambda_M \lesssim \|f_\alpha-f_\beta\|^2 \sim \eps^2|\Gamma_T|
    $$
    which was to be proven.
\end{proof}
\begin{lemma}
\label{lemma:KLDivKInvariant}
    Let $H_m$ and $F_T$ be defined as in the proof of Proposition \ref{prop:lowerBoundMKinvariant}. If $\eps T^{(r+d)/4} \sim 1$ for a sufficiently small bounding constant, then $f_\alpha \geq \frac{1}{2}$ for all $f_\alpha \in H_m$. In that case,
    $$
    \textup{KL}(f_\alpha,f_\beta) \lesssim \eps^2|F_T|.
    $$
\end{lemma}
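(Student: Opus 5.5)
The argument will follow the proof of Lemma \ref{lemma:KLDivNonKInvariant} almost verbatim. The only new ingredient is the sup-norm bound on $f_\alpha - 1$, which now has to account for the normalisation $\sqrt{d_\pi}\phi_\pi$ and for the smaller index set $F_T$.

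\emph{Step 1: sup-norm bound.} For every $p \in M$, Cauchy--Schwarz gives
$$
|f_\alpha(p)-1| \leq \eps\sum_{\pi\in F_T}\sqrt{d_\pi}\,|\phi_\pi(p)| \leq \eps\,|F_T|^{1/2}\Big(\sum_{\pi \in F_T} d_\pi|\phi_\pi(p)|^2\Big)^{1/2}.
$$
To bound the second factor, I use that $\sqrt{d_\pi}\phi_\pi$ is a unit vector in the $\kappa_\pi$-eigenspace of $\Delta$. Complete the family $\{\sqrt{d_\pi}\phi_\pi\}$ to an orthonormal eigenbasis $(e_k)$ of $L^2(M)$. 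The sum $\sum_{\pi\in F_T} d_\pi|\phi_\pi(p)|^2$ is then dominated by $\sum_{\lambda_k\le T}|e_k(p)|^2$, and the pointwise Weyl law recalled before Lemma \ref{lemma:KLDivNonKInvariant} bounds this by a constant times $T^{d/2}$, uniformly in $p$. An alternative is the trivial estimate $|\phi_\pi|\le 1$ together with $\sum_{\pi\in F_T} d_\pi \sim T^{d/2}$; it gives the same bound. Combining this with $|F_T|\sim T^{r/2}$ from the Cartan--Helgason lattice count yields
$$
\sup_{p\in M}|f_\alpha(p)-1| \lesssim \eps\, T^{r/4}T^{d/4} = \eps\,T^{(r+d)/4}.
$$
So if $\eps T^{(r+d)/4}$ is below a small enough constant, then $|f_\alpha - 1|\le \tfrac12$ and hence $f_\alpha\ge \tfrac12$ for all $f_\alpha\in H_m$. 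I also need $f_\alpha$ to be real and to integrate to one. This holds because $\phi_\pi$ is real under Assumption \ref{assumption:Invariant}, or, in general, because the coefficients can be paired with conjugates, and because $\int\phi_\pi\,d\lambda_M=0$ for nontrivial $\pi$.

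\emph{Step 2: KL bound.} With $\tfrac12\le f_\alpha,f_\beta\le \tfrac32$, the ratio $(f_\alpha-f_\beta)/f_\beta$ lies in a fixed compact subinterval of $(-1,\infty)$. I would expand $\log(1+x)=x+B(x)x^2$ with $B$ bounded on that interval. Substituting $f_\alpha=(f_\alpha-f_\beta)+f_\beta$, the linear term integrates to $\int (f_\alpha-f_\beta)\,d\lambda_M = 0$. The remaining terms are bounded by a constant times $\int |f_\alpha-f_\beta|^2\,d\lambda_M$, using $1/f_\beta\le 2$ and $|f_\alpha-f_\beta|\le 1$ to absorb the cubic term. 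More simply, one can invoke $\mathrm{KL}\le\chi^2=\int (f_\alpha-f_\beta)^2/f_\beta\le 2\|f_\alpha-f_\beta\|^2$. Finally, Parseval gives $\|f_\alpha-f_\beta\|^2=\eps^2\sum_{\pi\in F_T}|\alpha_\pi-\beta_\pi|^2\le 4\eps^2|F_T|$, which is the claimed bound.

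\emph{Main obstacle.} The delicate point is Step 1. The exponent $(r+d)/4$ comes from pairing the count $|F_T|^{1/2}$, which uses the rank, with the full Weyl local bound $T^{d/4}$ on the weighted sum of spherical functions, which uses the dimension. Getting it right requires care with the normalisation $\sqrt{d_\pi}\phi_\pi$, so that the pointwise Weyl estimate actually applies to it.
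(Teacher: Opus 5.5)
Your proposal is correct and matches the paper's proof. The paper takes exactly your ``alternative'' route in Step 1 (Cauchy--Schwarz with $|\phi_\pi|\le 1$, $\sum_{\pi\in F_T}d_\pi\sim T^{d/2}$ and $|F_T|\sim T^{r/2}$), then reuses the same $\log(1+x)=x+B(x)x^2$ expansion from Lemma \ref{lemma:KLDivNonKInvariant}; your bookkeeping there, using $\int(f_\alpha-f_\beta)\,d\lambda_M=0$ and $1/f_\beta\le 2$, or the $\chi^2$ bound, is cleaner than the paper's.

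One side remark needs correcting. Assumption \ref{assumption:Invariant} concerns $f_X$, not the spherical functions, and does not make $\phi_\pi$ real: on the torus viewed as a group, $\phi_n(\theta)=e^{in\theta}$. So the realness of $f_\alpha$, which the lemma tacitly presupposes, must come from your conjugate pairing $\alpha_{\bar\pi}=\alpha_\pi$. You also need to drop the trivial representation from $F_T$, so that $\int f_\alpha\,d\lambda_M=1$.
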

\begin{proof}
    In this case
    $$
    |f_\alpha -1| \leq \eps \sum_{\pi \in \Gamma_T}\sqrt{d_\pi}|\phi_\pi| \leq \eps \bigg(\sum_{\pi \in \Gamma_T}d_\pi\bigg)^{1/2}\bigg(\sum_{\pi \in \Gamma_T}1\bigg)^{1/2} \sim \eps T^{d/4}T^{r/4}
    $$
    since $|\phi_\pi(p)|\leq 1$ for all $p \in M$. It is clear that if $\eps T^{(r+d)/4}$ is chosen sufficiently small, then $f_\alpha \geq \frac{1}{2}$ for all $p \in M$ and all $f_\alpha \in H_m$. The rest of the proof is identical to that of Lemma \ref{lemma:KLDivNonKInvariant}.
\end{proof}

\section{Effect of Noisy Observations}
\label{Sec:FurtherWork}
So far, we have not discussed the influence that noise can have on the observations. A natural assumption is that the observations $p_1,p_2,\ldots, p_m$ are perturbed into
$$
z_k = \eps_k\cdot p_k
$$
for i.i.d.\ Riemannian brownian noise variables $\eps_k$, i.e. pure diffusion processes with diffusion term $\tau\Delta$ for some $\tau > 0$. This means by \eqref{eq:LevyKhinchinM} that $\ME[\phi_\pi(\eps_k)] = e^{-\frac{\tau^2\kappa_\pi}{2}}$. The method of taking empirical averages over $z_k$ instead of $p_k$ in Equation \eqref{eq:empiricalAverage} therefore tends towards $e^{-\frac{\tau^2\kappa_\pi}{2}}\nu_t(\phi_\pi)$, and in particular
$$
\langle f_X,\phi_\pi\rangle = \frac{1}{t\Lambda}\log(\nu_Z(\phi_\pi))+(1+\frac{\tau^2\kappa_\pi}{2t\Lambda}),
$$
where $\nu_Z$ is the distribution of $\eps_k\cdot p_k$. If the intensity of the noise is known \textit{a priori}, this poses no issue in decompounding since the corresponding term simply needs to be included in an altering of the estimator in Proposition \ref{prop:L1Estimator} into
\begin{align}
\label{eq:estimatorWithNoise}
    c^\eps_m(\pi) := \begin{cases}
        \frac{1}{t\Lambda}\log(\nu_m)+(1+\frac{s^2\kappa_\pi}{2t\Lambda}) & \textup{if } \nu_m \geq \frac{\delta}{m}\\
        0 & \textup{otherwise,}
    \end{cases}
\end{align}
from which the quality of the estimator remains unchanged. Indeed, the arguments for the convergence rates in Proposition \ref{prop:L1Estimator} are not altered since $\frac{(\phi_\pi(\eps_k\cdot p_k)+\overline{\phi_\pi(\eps_k\cdot p_k)})}{2}$ remains to be a real-valued random variable bounded on $[-1,1]$. However, a more realistic assumption is either that $\tau$ is entirely unknown, or that it is jointly approximated by some estimator $\hat{\tau}_m$ of $\tau$. Assuming that we are in the event where $\nu_m \geq \frac{\delta}{m}$, the error then takes the form
$$
|c_m^\eps(\pi)-\langle f_X,\phi\rangle| = |\frac{1}{t\Lambda}\big(\log(\nu_m)-\log(\nu_Z(\phi_\pi))\big)+\kappa_\pi\frac{\hat{\tau}_m^2-\tau^2}{2t\Lambda}|,
$$
which is troublesome since the Casimir elements $\kappa_\pi$ get arbitrarily large. Finding estimators $\hat{\tau}_m$ that utilize the perturbed observations would therefore be a worthwhile pursuit, especially if convergence can be ensured that are fast enough for the contribution of $\kappa_\pi(\hat{\tau}_m^2-\tau^2)$ to be negligible in the estimator of Equation \eqref{eq:estimatorWithNoise}.

Another approach is to apply a convolution mask to the series expansion. One such example would be to adjust the density estimator into
$$
f_X^{(m)} := \sum_{\pi \in \Gamma_m} d_\pi e^{-C\kappa_\pi}c^\eps_{m}(\pi)\phi_\pi
$$
for a positive constant $C > 0$, but this evidently sacrifices the accuracy of the estimator. For application purposes, it would be useful to perform simulated experiments that compare how these altercations fare, and how problematic disturbances by noise truly are.

%meaning explicitly that for any two vector fields $\xi,\eta$ on the Lie algebra $\mathfrak{g}$ of $G$ the metric $\rho$ is invariant under the adjoint representation $\textup{\textbf{Ad}}$ for all $x \in G$:
%$$
%\rho(\xi,\eta) = \rho(\textup{\textbf{Ad}}(x)\xi,\textup{\textbf{Ad}}%(x)\eta)
%$$
%for all $\xi,\eta \in \mathfrak{g}$ and all $x \in G$.

\section{Conclusion}
\label{sec:Conclusion}
Estimators for decompounding on compact symmetric spaces have been constructed and they converge in mean squared error at a rate of $m^{-2s/(2s+d)}$. We have shown that this is a lower bound on density estimation in compact symmetric spaces, but that the bound is not necessarily tight for the decompounding problem where we furthermore assume $K$-invariance on the density. As such, it was furthermore shown that under this assumption a lower bound is found in $m^{-2s/(2s+r)}$ instead. This means that the decompounding method is asymptotically optimal for compact symmetric spaces with $d =r$, i.e.\ the flat toruses $\mathbb{T}^d$, and that the asymptotic rate of convergence strays further from the proven lower bound for lower ranks of $M$. For large $s$, i.e.\ as the smoothness of the densities increase, the difference between the lower bound and attained rate of the decompounding method is negligible. 

Perturbations on the observations by Gaussian noise lead to worse convergence rates unless information of the intensity of the noise is known. Further investigations are needed on whether estimations of the noise intensity can at least partly recover convergence rates. In addition, with slight alterations to the estimator in Proposition \ref{prop:L1Estimator} there is no need to assume that the distribution of the steps are inverse invariant. The largest gap to consider for further work lies in whether the decompounding estimator can be improved to reach the general lower bound given by Proposition \ref{prop:lowerBoundMKinvariant}.

The deconvolution problem appears naturally in statistical problems in engineering applications and the natural sciences, and can now be formulated and solved on a broader range of commonly appearing manifolds that are not necessarily Lie groups.

% if have a single appendix:
%\appendix[Proof of the Zonklar Equations]
% or
%\appendix  % for no appendix heading
% do not use \section anymore after \appendix, only \section*
% is possibly needed

% use appendices with more than one appendix
% then use \section to start each appendix
% you must declare a \section before using any
% \subsection or using \label (\appendices by itself
% starts a section numbered zero.)
%

%\appendices
%\section{Proof of the First Zonklar Equation}
%Appendix one text goes here.

% you can choose not to have a title for an appendix
% if you want by leaving the argument blank
%\section{}
%Appendix two text goes here.

% use section* for acknowledgment
%\section*{Acknowledgment}

%The authors would like to thank...

% Can use something like this to put references on a page
% by themselves when using endfloat and the captionsoff option.
\ifCLASSOPTIONcaptionsoff
  \newpage
\fi

\end{document}